\theoremstyle{plain}
\newtheorem{thm}{Theorem}[section]
\newtheorem*{thm*}{Theorem}
\newtheorem{lem}[thm]{Lemma}
\newtheorem{cor}[thm]{Corollary}
\newtheorem*{cor*}{Corollary}
\newtheorem*{claim*}{Claim}
\theoremstyle{definition}
\newtheorem{defn}[thm]{Definition}
\theoremstyle{remark}
\newtheorem*{rem*}{Remark}
\newtheorem*{ex*}{Example}
\newtheorem*{quest*}{Question}
\newtheorem*{ackn*}{Acknowledgements}
\theoremstyle{plain}
\newtheorem*{comp_ax*}{Comparison axiom}
\newtheorem*{ext_ax*}{Extension axiom}
\theoremstyle{remark}
\newtheorem*{summ*}{Summary}
\newcommand{\Vol}{\operatorname{Vol}}
\newcommand{\stabsys}{\operatorname{stabsys}}
\newcommand{\sys}{\operatorname{sys}}
\begin{document}

%%%%%%%%%%%%%%%%%%%%%%%%%%%%%%%%%%%%%%%%%%%%%%%%%%%%%%%%%%%%%%%%%%%%%%%%%%%%%%%%%%%%%
%%%%%%%%%%%%%%%%%%%%%%%%%%%%%%%%%%%%%%%%%%%%%%%%%%%%%%%%%%%%%%%%%%%%%%%%%%%%%%%%%%%%%

\title{On manifolds satisfying stable systolic inequalities}
\author{Michael Brunnbauer}
\address{Mathematisches Institut, Ludwig-Maximilians-Universit\"at M\"unchen, Theresienstr.\ 39, D-80333 M\"unchen, Germany}
\email{michael.brunnbauer@mathematik.uni-muenchen.de}
\date{\today}
%\thanks{Support from the \emph{Deutsche Forschungsgemeinschaft} is gratefully acknowledged.}
\keywords{stable systole, cohomology ring}
\subjclass[2000]{Primary 53C23; Secondary 53C20}

\begin{abstract}
We show that for closed orientable manifolds the $k$-dimensional stable systole admits a metric-independent volume bound if and only if there are cohomology classes of degree $k$ that generate cohomology in top-degree. Moreover, it turns out that in the nonorientable case such a bound does not exist for stable systoles of dimension at least two.

Additionally, we prove that the stable systolic constant depends only on the image of the fundamental class in a suitable Eilenberg-Mac\,Lane space. Consequently, the stable $k$-systolic constant is completely determined by the multilinear intersection form on $k$-dimensional cohomology.
\end{abstract}

\maketitle

%%%%%%%%%%%%%%%%%%%%%%%%%%%%%%%%%%%%%%%%%%%%%%%%%%%%%%%%%%%%%%%%%%%%%%%%%%%%%%%%%%%%%
%%%%%%%%%%%%%%%%%%%%%%%%%%%%%%%%%%%%%%%%%%%%%%%%%%%%%%%%%%%%%%%%%%%%%%%%%%%%%%%%%%%%%

\section{Introduction}\label{intro}

Let $M$ be a connected closed smooth manifold of dimension $n$, and let $g$ be a Riemannian metric on it. Let $1\leq k\leq n-1$ be an integer. The volume of an integral or real $k$-dimensional Lipschitz cycle $c=\sum_i r_i\sigma_i$ is given by
\[ \Vol_k(c) := {\textstyle\sum_i} |r_i| \Vol_k(\Delta^k,\sigma_i^* g). \]
To see that this is well-defined note that the pullback `metric' $\sigma_i^*g$ is almost everywhere defined by Rademacher's theorem and is positive semidefinite. Thus, it has an almost everywhere defined `volume form', and $\Vol_k(\Delta^k,\sigma_i^* g)$ is the integral of this $k$-form over $\Delta^k$.

For an integral homology class $\alpha\in H_k(M;\mathbb{Z})$ the \emph{volume} $\Vol_k(\alpha)$ is defined as the infimum of the volumes of all integral Lipschitz cycles representing $\alpha$. The \emph{stable norm} $\|\alpha\|$ of a real homology class $\alpha \in H_k(M;\mathbb{R})$ is defined in the same way but using all real Lipschitz cycles representing $\alpha$. Federer showed that the stable norm is in fact a norm, and moreover that  
\[ \|\alpha\| = \lim_{i\to\infty} {\textstyle\frac{1}{i}}\Vol_k(i\alpha) \]
holds for all integral homology classes $\alpha$. (This is proved in \cite{Federer(1974)}, sections 4 and 5. See also \cite{Gromov(1999)}, section 4.C.).

The \emph{stable $k$-systole} $\stabsys_k(M,g)$ is defined as the minimum of the stable norm on the nonzero classes of the integral lattice $H_k(M;\mathbb{Z})_\mathbb{R}$ in $H_k(M;\mathbb{R})$. The main focus of the present article is on the existence and nonexistence of stable systolic inequalities of the form
\begin{equation}\label{stabsys_ineq}
\stabsys_k(M,g)^{n/k} \leq C(M) \cdot \Vol(M,g),
\end{equation}
in which the constant $C(M)$ does not depend on the metric $g$. Therefore, it is natural to look at the \emph{stable $k$-systolic constant}
\[ \sigma_k^{st} (M) := \inf_g \frac{\Vol(M,g)}{\stabsys_k(M,g)^{n/k}}\;, \]
where the infimum is taken over all Riemannian metrics $g$ on $M$. (If $H_k(M;\mathbb{Z})_\mathbb{R}$ is trivial, the stable $k$-systolic constant is understood as zero.) Obviously, $\sigma_k^{st}(M) > 0$ if and only if $M$ satisfies a stable systolic inequality \eqref{stabsys_ineq}. Moreover, the reciprocal of $\sigma_k^{st}(M)$ is the best constant $C(M)$ such that \eqref{stabsys_ineq} holds.

For example, it is known by work of Gromov that
\[ \sigma_2^{st} (\mathbb{C}\mathrm{P}^n) = 1/n!\;, \]
see \cite{Gromov(1999)}, Theorem 4.36. (A more detailed proof may be found in \cite{Katz(2007)}, section 13.2.) In fact, this is the only example of a higher-dimensional stable systolic constant whose value is known and not zero.

In his Filling paper, Gromov gave a sufficient condition for an orientable manifold to satisfy a more general stable systolic inequality.

\begin{thm}[\cite{Gromov(1983)}, 7.4.C]\label{thm_gromov}
Let $M$ be a connected closed orientable manifold of dimension $n$, and let $(k_1,\ldots, k_p)$ be a partition of $n$, i.\,e.\ an unordered sequence of positive integers such that $n=\sum_{i=1}^p k_i$. If there are cohomology classes $\beta_i \in H^{k_i} (M;\mathbb{R})$ such that their cup product $\beta_1\smile\cdots\smile\beta_p\in H^n(M;\mathbb{R})$ does not vanish, then 
\[ \prod_{i=1}^p \stabsys_{k_i}(M,g) \leq C \cdot \Vol(M,g) \]
for a constant $C>0$ depending only on the dimension $n$, the partition $(k_1,\ldots,k_p)$, and the Betti numbers $b_{k_i}(M)$ of $M$.
\end{thm}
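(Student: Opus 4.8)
The plan is to combine Federer's duality between the stable norm and the comass norm with a transference estimate from the geometry of numbers. \emph{Step 1 (a cup-product/comass estimate).} Recall (Federer \cite{Federer(1974)}; see also \cite{Gromov(1999)}, 4.C) that in every degree $k$ the stable norm on $H_k(M;\mathbb{R})$ is dual, under the evaluation pairing, to the comass norm $\|\cdot\|^{\ast}$ on $H^k(M;\mathbb{R})$, where $\|\beta\|^{\ast}=\inf\{\sup_{x\in M}\operatorname{comass}(\omega_x):\omega\text{ closed, }[\omega]=\beta\}$. I would then prove the pointwise inequality
\[ \operatorname{comass}_x(\omega_1\wedge\cdots\wedge\omega_p)\ \le\ \binom{n}{k_1,\dots,k_p}\prod_{i=1}^{p}\operatorname{comass}_x(\omega_i) \]
for closed forms $\omega_i$ of degree $k_i$ with $\sum_i k_i=n$, by expanding the left-hand $n$-covector on an orthonormal frame into its $\binom{n}{k_1,\dots,k_p}$ shuffle terms, each a signed product $\prod_i\omega_i(e_{S_i})$ in which every factor is evaluated on a unit simple multivector and hence bounded by the corresponding comass. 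Integrating over $M$ — using orientability, so that $\int_M$ is defined and the comass of an $n$-form equals the absolute value of its density with respect to the Riemannian volume form — and passing to the infimum over representatives gives, for all $\beta_i\in H^{k_i}(M;\mathbb{R})$,
\[ \bigl|\langle\beta_1\smile\cdots\smile\beta_p,[M]\rangle\bigr|\ \le\ \binom{n}{k_1,\dots,k_p}\,\Vol(M,g)\,\prod_{i=1}^{p}\|\beta_i\|^{\ast}. \]

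\emph{Step 2 (short integral cohomology classes).} For each $i$ put $\Lambda_i=H_{k_i}(M;\mathbb{Z})_{\mathbb{R}}$, so that $\stabsys_{k_i}(M,g)=\lambda_1(\Lambda_i)$ is the first successive minimum of $\Lambda_i$ for the stable-norm ball. By the universal coefficient theorem the dual lattice of $\Lambda_i$ under the evaluation pairing is the image $H^{k_i}(M;\mathbb{Z})_{\mathbb{R}}$ of integral cohomology, and by Step 1 the relevant norm on the dual space is the comass norm. Mahler's transference theorem (or Banaszczyk's sharper version) bounds the product of the first successive minimum of a $b$-dimensional lattice by the last successive minimum of its dual, $\lambda_1(\Lambda_i)\cdot\lambda_{b_{k_i}}(\Lambda_i^{\ast})\le c(b_{k_i})$ with $c$ a function of the rank alone. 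Hence there exist linearly independent integral classes $\gamma^{(i)}_1,\dots,\gamma^{(i)}_{b_{k_i}}\in H^{k_i}(M;\mathbb{Z})$ with $\|\gamma^{(i)}_j\|^{\ast}\le c(b_{k_i})/\stabsys_{k_i}(M,g)$; in particular they form a basis of $H^{k_i}(M;\mathbb{R})$.

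\emph{Step 3 (conclusion).} Finally I would expand the hypothesised classes $\beta_i$ in the bases $(\gamma^{(i)}_j)_j$; since $(\beta_1,\dots,\beta_p)\mapsto\langle\beta_1\smile\cdots\smile\beta_p,[M]\rangle$ is multilinear and, by assumption, does not vanish at $(\beta_1,\dots,\beta_p)$, some evaluation $\langle\gamma^{(1)}_{j_1}\smile\cdots\smile\gamma^{(p)}_{j_p},[M]\rangle$ is nonzero. As these are integral classes and $M$ is oriented, this number is a nonzero integer, so its absolute value is at least $1$. Feeding $\gamma_i:=\gamma^{(i)}_{j_i}$ into the estimate of Step 1,
\[ 1\ \le\ \binom{n}{k_1,\dots,k_p}\,\Vol(M,g)\,\prod_{i=1}^{p}\|\gamma_i\|^{\ast}\ \le\ \binom{n}{k_1,\dots,k_p}\Bigl(\prod_{i=1}^{p}c(b_{k_i})\Bigr)\frac{\Vol(M,g)}{\prod_{i=1}^{p}\stabsys_{k_i}(M,g)}, \]
which rearranges to $\prod_{i=1}^{p}\stabsys_{k_i}(M,g)\le C\cdot\Vol(M,g)$ with $C=\binom{n}{k_1,\dots,k_p}\prod_{i=1}^{p}c(b_{k_i})$, depending only on $n$, the partition, and the Betti numbers $b_{k_i}(M)$.

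The analytic heart, and hence the main obstacle, is Step 1: the identification of the comass norm as the norm dual to the stable norm (the content of Federer's work, together with the pointwise multivector estimate). Once this is granted, the transference theorem is a standard black box and the rest is bookkeeping. Orientability is used only to have a fundamental class $[M]\in H_n(M;\mathbb{Z})$ pairing integrally with integral top cohomology; the constant loses a great deal (compare the known value for $\mathbb{C}\mathrm{P}^n$), but the argument gives existence of a metric-independent $C$, which is all that is claimed.
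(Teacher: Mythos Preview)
The paper does not actually prove this theorem: it is quoted from Gromov's Filling paper and attributed to \cite{Gromov(1983)}, 7.4.C, with a pointer to \cite{BK(2003)}, Theorem~2.1, for the precise dependence of the constant on the Betti numbers and the partition. Your proposal is essentially the argument carried out in those references: Federer's duality identifying the comass norm on $H^{k}(M;\mathbb{R})$ as the dual of the stable norm, a pointwise comass inequality for wedge products leading to
\[
\bigl|\langle\beta_1\smile\cdots\smile\beta_p,[M]_\mathbb{Z}\rangle\bigr|\le\binom{n}{k_1,\dots,k_p}\Vol(M,g)\prod_i\|\beta_i\|^\ast,
\]
and then a geometry-of-numbers transference step producing an integral basis of each $H^{k_i}(M;\mathbb{Z})_\mathbb{R}$ of small comass, so that some integral cup-length pairs to a nonzero integer with $[M]_\mathbb{Z}$. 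This is exactly the Gromov--Bangert--Katz scheme, and your sketch is correct.

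Two small remarks on presentation. First, in Step~2 the statement $\lambda_1(\Lambda_i)\cdot\lambda_{b_{k_i}}(\Lambda_i^\ast)\le c(b_{k_i})$ is the general-norm transference theorem (for the stable norm and its dual, the comass norm), not the Euclidean version; it is worth saying explicitly which form you invoke, since the constant $c(b)$ and the proof differ. Second, your multinomial constant in Step~1 is the crude shuffle bound; the references obtain slightly sharper combinatorial constants, but as you note this affects only the size of $C$, not the qualitative conclusion.
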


For the dependence on the Betti numbers and the partition the reader is referred to \cite{BK(2003)}, Theorem 2.1. Applied to stable systolic inequalities \eqref{stabsys_ineq}, i.\,e.\ to the case of partitions $(k,\ldots,k)$, this theorem implies that the stable $k$-systolic constant $\sigma_k^{st}(M)$ of a connected closed orientable manifold of dimension $n=kp$ is positive if there are cohomology classes $\beta_1,\ldots,\beta_p \in H^k (M;\mathbb{R})$ such that $\beta_1\smile\cdots\smile\beta_p\neq 0$. (See also \cite{BK(2003)}, Theorem 2.7.) 

For example, cohomologically symplectic manifolds (i.\,e.\ even-dimensional manifolds $M^{2n}$ possessing a cohomology class $\omega\in H^2(M;\mathbb{R})$ such that $\omega^n\neq 0$) have nonvanishing stable systolic constants in all even dimensions that divide the dimension of $M$.

In \cite{Babenko(1992)}, Theorem 8.2 (c), Babenko showed that in the case $k=1$ the condition stated above is also necessary for the existence of a stable $1$-systolic inequality. More precisely, he proved that if the Jacobi mapping $\Phi:M\to T^b$, with $b:=b_1(M)$ the first Betti number, maps the fundamental class $[M]_\mathbb{Z}$ of the manifold to zero, then $\sigma_1^{st}(M)=0$. (Throughout this article the subscript will indicate the coefficients of the homology group in which the fundamental class lives. This is necessary because later on we will also consider coefficients in $\mathbb{Z}_2$ and in the orientation bundle.) But if $\Phi_*[M]_\mathbb{Z}\neq 0$, then there are cohomology classes $\beta'_1,\ldots,\beta'_n\in H^1(T^b;\mathbb{R})$ such that 
\[ \langle \beta'_1 \smile\cdots\smile \beta'_n, \Phi_*[M]_\mathbb{Z} \rangle\neq 0 \]
since the cohomology of the torus is generated by classes of degree one. Therefore, the cohomology classes $\beta_i := \Phi^*\beta'_i \in H^1(M;\mathbb{R})$ have nonvanishing product, and the stable $1$-systolic constant of $M$ is nonzero by Gromov's theorem.

Extending this, we will show the following equivalence, which is valid for all integers $1\leq k \leq n-1$.

\begin{thm}\label{stabsys_thm}
Let $M$ be a connected closed orientable manifold of dimension $n$. The stable $k$-systolic constant $\sigma_k^{st}(M)$ does not vanish if and only if $n$ is a multiple of $k$, say $n=kp$, and there exist cohomology classes $\beta_1,\ldots,\beta_p \in H^k(M;\mathbb{R})$ such that $\beta_1\smile\cdots\smile\beta_p\neq 0$ in $H^n(M;\mathbb{R})$.
\end{thm}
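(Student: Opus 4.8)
The plan is to derive Theorem~\ref{stabsys_thm} by combining Gromov's Theorem~\ref{thm_gromov} with a general principle --- one of the main results of this paper --- according to which $\sigma_k^{st}(M)$ is determined by the image of the fundamental class under the degree-$k$ analogue of the Jacobi map, i.e.\ a map into a product of Eilenberg-Mac\,Lane spaces. The ``if'' direction needs nothing new: if $n=kp$ and $\beta_1,\dots,\beta_p\in H^k(M;\mathbb{R})$ satisfy $\beta_1\smile\cdots\smile\beta_p\neq 0$, then Theorem~\ref{thm_gromov} applied to the partition $(k,\dots,k)$ gives $\stabsys_k(M,g)^p\leq C\cdot\Vol(M,g)$ with $C$ independent of $g$, so $\sigma_k^{st}(M)\geq 1/C>0$.

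For the converse I argue contrapositively. If $b:=b_k(M)=0$ the integral lattice in $H_k(M;\mathbb{R})$ is trivial and there is nothing to prove, so assume $b\geq 1$. Choose integral classes $a_1,\dots,a_b\in H^k(M;\mathbb{Z})$ projecting to a basis of $H^k(M;\mathbb{Z})/\mathrm{torsion}$ and let $\Phi\colon M\to X:=\prod_{i=1}^{b}K(\mathbb{Z},k)$ be the map with $\Phi^*\iota_i=a_i$ for the canonical generators $\iota_i\in H^k(X;\mathbb{Z})$; then $\Phi^*$ is an isomorphism on $H^k(\,\cdot\,;\mathbb{R})$. Since $H^*(K(\mathbb{Z},k);\mathbb{R})$ is the free graded-commutative algebra on one generator of degree $k$, the ring $H^*(X;\mathbb{R})$ is concentrated in degrees divisible by $k$ and is spanned in degree $n=kp$ by the $p$-fold products $\iota_{i_1}\smile\cdots\smile\iota_{i_p}$. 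Combining the surjectivity of $\Phi^*$ in degree $k$ with $\langle\iota_{i_1}\smile\cdots\smile\iota_{i_p},\Phi_*[M]_\mathbb{R}\rangle=\langle\Phi^*\iota_{i_1}\smile\cdots\smile\Phi^*\iota_{i_p},[M]_\mathbb{R}\rangle$ and the perfectness of the pairing $H^n(X;\mathbb{R})\times H_n(X;\mathbb{R})\to\mathbb{R}$, one sees that the negation of the condition in the theorem --- either $k\nmid n$, or $k\mid n$ but every $p$-fold cup product of degree-$k$ real classes on $M$ vanishes --- is precisely the statement $\Phi_*[M]_\mathbb{R}=0$ in $H_n(X;\mathbb{R})$.

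The theorem is thus reduced to the implication $\Phi_*[M]_\mathbb{R}=0\Rightarrow\sigma_k^{st}(M)=0$. I intend to deduce this from the Eilenberg-Mac\,Lane principle, which states that $\sigma_k^{st}(M)$ depends only on the image $\Phi_*[M]_\mathbb{R}\in H_n(X;\mathbb{R})$, by comparing $M$ with the model manifold $M_0:=\#^{\,b}\!\bigl(S^k\times S^{n-k}\bigr)$. Here I use that in the nontrivial range of the converse one has $p\neq 1$ (since $k\leq n-1$) and, by Poincar\'e duality, $p\neq 2$ (for $n=2k$ the intersection form on $H^k(M;\mathbb{R})$ is nondegenerate, hence nonzero once $b\geq 1$), so that $n\neq 2k$ and $M_0$ indeed satisfies $b_k(M_0)=b$, carries a degree-$k$ Jacobi map to $X$, and has all $p$-fold cup products of degree-$k$ classes equal to zero --- thus vanishing fundamental-class image as well. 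Granting the principle, $\sigma_k^{st}(M)=\sigma_k^{st}(M_0)$, and the latter vanishes by a direct computation: shrinking the $S^{n-k}$-factors of a product metric on $M_0$ while keeping the $S^k$-factors of unit size makes $\Vol(M_0,g)\to 0$ whereas $\stabsys_k(M_0,g)$ stays bounded below by the volume of an $S^k$-factor.

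The real work, and the expected main obstacle, is the Eilenberg-Mac\,Lane principle itself: one must transfer Riemannian metrics between manifolds carrying maps to $X$ with the same fundamental-class image --- equivalently, realize a prescribed class in $H_n(X;\mathbb{R})$ by a geometrically controlled Lipschitz cycle in $X$ --- while simultaneously controlling the volume and the stable $k$-systole. The delicate half is bounding the stable systole from \emph{above}, since cycles can always be compressed; I expect to handle this by combining the Federer--Fleming deformation with the description of the stable norm through calibrating de Rham forms, which are available on $X$ and pull back along $\Phi$.
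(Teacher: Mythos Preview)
Your reduction to the homological-invariance principle (Theorem~\ref{homol_invar_thm}) is logically sound: once that principle is in hand, comparing $M$ with $M_0=\#^b(S^k\times S^{n-k})$ and shrinking the $S^{n-k}$-factors does yield $\sigma_k^{st}(M)=0$ whenever the cup-product condition fails (and your case-split ruling out $p=2$ via Poincar\'e duality is correct). But this route is considerably more circuitous than the paper's, and in fact inverts its logical order. The paper proves Theorem~\ref{stabsys_thm} \emph{directly}, without any appeal to Theorem~\ref{homol_invar_thm}: instead of $K(\mathbb{Z}^b,k)$ one maps $M$ into the complex $L_k^b$ (with $L_k=S^k$ for $k$ odd and $L_k\simeq\Omega S^{k+1}$ for $k$ even), which has torsion-free homology and cells only in degrees divisible by~$k$ (Lemma~\ref{map_loopspace_sphere}). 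If the cup-product condition fails then $f_*[M]_\mathbb{Z}=0$ in $H_n(L_k^b;\mathbb{R})$, hence integrally (no torsion!), so Lemma~\ref{n-1-skeleton} deforms $f$ into the $(n{-}1)$-skeleton; the resulting map is $(n,0)$-monotone and the comparison axiom alone gives $\sigma_k^{st}(M)\le 0$. No extension axiom, no model manifold. Conversely, the paper's proof of Theorem~\ref{homol_invar_thm} itself needs this very $L_k^b$ construction (to kill the torsion in $H_n(K(\mathbb{Z}^b,k);\mathbb{Z})$) together with the extension axiom and Lemma~\ref{n,1-monotone}, so your approach ends up invoking strictly more machinery to prove strictly less.

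Your closing paragraph, where the actual work is deferred, is the real gap. The sketch ``Federer--Fleming deformation plus calibrating de~Rham forms on $X$'' is too vague to evaluate and does not resemble the paper's method, which is entirely combinatorial: pull back piecewise-smooth metrics along monotone simplicial maps for one inequality, and control the stable systole under cell attachment via the coarea formula and the isoperimetric inequality for small cycles for the other (Lemma~\ref{ext_ax}). Two specific warning signs: $X=K(\mathbb{Z}^b,k)$ is not a manifold, so ``de~Rham forms on $X$'' needs interpretation; and your identification of the delicate direction as ``bounding the stable systole from above'' is inverted --- exhibiting one small cycle bounds it from above, while what is genuinely hard in transferring metrics is a \emph{lower} bound on every nonzero integral class, which is exactly what the paper's monotone-map/pullback mechanism supplies.
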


For instance, by Poincar\'e duality, the stable middle-dimensional systolic constant of even-dimensional manifolds vanishes if and only if the middle-dimensional Betti number is zero.

For nonorientable manifolds we will prove a kind of `general stable systolic freedom' whenever no one-dimensional systoles are involved. (The term \emph{(stable) systolic freedom} refers to the absence of a (stable) systolic inequality, see e.\,g.\ the articles \cite{KS(1999)}, \cite{KS(2001)}, and \cite{Babenko(2002)} for other kinds of systolic freedom.)

\begin{thm}\label{freedom}
Let $M$ be a connected closed nonorientable manifold of dimension $n$.  Let $p_k\geq 0$ be nonnegative real numbers for $k=2,\ldots,n-1$. Then there is no constant $C>0$ such that
\[ \prod_{k=2}^{n-1} \stabsys_k(M,g)^{p_k} \leq C \cdot \Vol(M,g) \]
holds for all Riemannian metrics $g$ on $M$. In particular, $\sigma_k^{st}(M)=0$ for $k=2,\ldots,n-1$.
\end{thm}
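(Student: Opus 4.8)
The plan is to produce, for every $\varepsilon>0$, a Riemannian metric $g$ on $M$ with $\Vol(M,g)<\varepsilon$ while $\stabsys_k(M,g)\geq 1$ for every $k\in\{2,\dots,n-1\}$ with $H_k(M;\mathbb{Z})_\mathbb{R}\neq 0$ (when some $p_k>0$ but $H_k(M;\mathbb{Z})_\mathbb{R}=0$ the claimed non-inequality is trivial). Such a family of metrics at once rules out any bound $\prod_{k=2}^{n-1}\stabsys_k(M,g)^{p_k}\leq C\cdot\Vol(M,g)$ and, since $\Vol(M,g)/\stabsys_k(M,g)^{n/k}\leq\Vol(M,g)\to 0$, shows $\sigma_k^{st}(M)=0$ for each $k$.

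The feature of nonorientable manifolds that creates the flexibility is the vanishing of the top real cohomology: by Poincar\'e--Lefschetz duality with coefficients in the orientation line bundle, $H_n(M;\mathbb{R})\cong H^0(M;\mathbb{R}_{w_1})=0$ because $w_1\neq 0$, hence also $H^n(M;\mathbb{R})=0$. Thus every cup product $\beta_1\smile\dots\smile\beta_p$ of real cohomology classes of degree $k$ with $kp=n$ vanishes, for \emph{every} $k$ simultaneously, so the sufficient condition of Theorems~\ref{thm_gromov} and \ref{stabsys_thm} is violated in all degrees. I would turn this into a metric collapse in the spirit of Babenko and Gromov. Choose integral classes $x_1,\dots,x_m$ spanning $\bigoplus_{j=2}^{n-1}H^j(M;\mathbb{Q})$ and let $u\colon M\to Y:=\prod_\ell K(\mathbb{Z},\deg x_\ell)$ classify them; after cellular approximation $u$ lands in the finite $n$-skeleton $Y^{(n)}$, and $u^{*}$ is onto in $H^j(-;\mathbb{Q})$ for $2\leq j\leq n-1$. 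The crucial step is to replace $u$ by a homotopic map $f\colon M\to X$ whose target is a finite complex of dimension at most $n-1$ (which, passing to a regular neighbourhood, we may take to be a compact smooth manifold so Federer's theorem applies) with $f^{*}$ still onto in $H^j(-;\mathbb{Q})$ for those $j$. Granting this, fix a metric $g_X$ on $X$ and an auxiliary metric $g_0$ on $M$, and set $g_\lambda:=f^{*}g_X+\lambda g_0$. Since the image of $f$ has Hausdorff dimension at most $n-1$, the semidefinite form $f^{*}g_X$ has rank $\leq n-1$ almost everywhere, so $\Vol(M,g_\lambda)=O(\lambda^{1/2})\to 0$. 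On the other hand, for a real Lipschitz $j$-cycle $c$ with $0\neq[c]\in H_j(M;\mathbb{Z})_\mathbb{R}$ we get $\Vol_j(c,g_\lambda)\geq\Vol_j(c,f^{*}g_X)=\Vol_j(f_{\#}c,g_X)\geq\|f_{*}[c]\|_{g_X}$; surjectivity of $f^{*}$ in degree $j$ makes $f_{*}$ injective on $H_j(-;\mathbb{R})$, and $f_{*}$ carries the lattice $H_j(M;\mathbb{Z})_\mathbb{R}$ into the lattice $H_j(X;\mathbb{Z})_\mathbb{R}$, which is discrete; hence $\stabsys_j(M,g_\lambda)\geq\inf\{\|f_{*}\alpha\|_{g_X}:0\neq\alpha\in H_j(M;\mathbb{Z})_\mathbb{R}\}=:c_j>0$ independently of $\lambda$. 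Finally, rescale $g_\lambda$ by a single constant $t$ with $t^{j}c_j\geq 1$ for all $j\in\{2,\dots,n-1\}$ and let $\lambda\to 0$.

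The hard part is constructing the $(n-1)$-dimensional target $X$, and nonorientability is used here a second time. Compressing $u$ into the $(n-1)$-skeleton $Y^{(n-1)}$ is obstructed by the homotopy class of $M\xrightarrow{u}Y^{(n)}\to Y^{(n)}/Y^{(n-1)}\simeq\bigvee S^n$, which lies in $[M,\bigvee S^n]\cong H^n(M;\mathbb{Z})^{\#}$; as $M$ is closed and nonorientable, $H^n(M;\mathbb{Z})\cong\mathbb{Z}_2$, so this obstruction is $2$-torsion and hence dies rationally. Exploiting this — say by first replacing each $x_\ell$ by a suitable multiple and then absorbing the residual $2$-torsion by a modification of the top cells of $Y^{(n)}$ (replacing an attaching map $\phi$ by $2\phi$), or else by appealing to the structural fact that $\sigma_k^{st}(M)$ depends only on the image of the fundamental class in $K(\mathbb{Z}^{b_k},k)$, equivalently only on the multilinear intersection form on $H^k(M;\mathbb{R})$, which vanishes identically here — one should obtain the required $f$. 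The points I expect to be delicate are carrying out this compression simultaneously in all degrees $j\in\{2,\dots,n-1\}$, verifying that $X$ can be kept genuinely $(n-1)$-dimensional, and dealing with the handful of low-dimensional exceptional cases in the homotopy argument.
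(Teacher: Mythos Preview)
Your strategy is exactly the paper's: map $M$ into a simply-connected product target capturing $H_k(M;\mathbb{R})$ for all $2\le k\le n-1$, use nonorientability to compress the map into the $(n-1)$-skeleton, and then run the Babenko pullback-metric argument $g_\lambda=F^*g_X+\lambda g_0$ to make the volume tend to zero while the stable systoles stay bounded below.

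The one substantive difference is the target and the handling of the compression. The paper takes $L_k=S^k$ for $k$ odd and $L_k\simeq\Omega S^{k+1}$ for $k$ even, rather than $K(\mathbb{Z},k)$; these $L_k$ have torsion-free homology and come with an explicit ``doubling'' self-map that is visibly zero on $H_*(-;\mathbb{Z}_2)$ in positive degrees. Composing with that self-map gives $F_*[M]_{\mathbb{Z}_2}=0$, and then Lemma~\ref{n-1-skeleton}(iii) (the mod-$2$ fundamental-class criterion, applicable because the target is simply connected) immediately yields the factorisation through the $(n-1)$-skeleton. Your cell-by-cell obstruction in $[M,\bigvee S^n]\cong H^n(M;\mathbb{Z})^{\#}$ is the same obstruction in disguise, but your proposed remedy---``replace $x_\ell$ by a multiple'' or ``replace an attaching map $\phi$ by $2\phi$''---is not yet a proof: doubling $x_\ell$ changes the map, not evidently the obstruction class by a factor of $2$, and altering attaching maps of $Y^{(n)}$ risks destroying the injectivity you need on real homology. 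The clean fix is the paper's: observe that the doubling self-map of $K(\mathbb{Z},k)$ kills $\iota_k$ mod $2$, hence (by naturality under Steenrod operations) is zero on all of $H^{>0}(-;\mathbb{Z}_2)$, so after composing you get $F_*[M]_{\mathbb{Z}_2}=0$ and Lemma~\ref{n-1-skeleton} applies. With that adjustment your argument and the paper's coincide; your fallback to the multilinear-intersection-form theorem would only give each $\sigma_k^{st}(M)=0$ separately, not the simultaneous inequality in the statement.
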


Denote by $b:=b_k(M)$ the $k$-th Betti number. There exists a map $\Phi: M\to K(\mathbb{Z}^b,k)$ that induces an isomorphism
\[ H_k(M;\mathbb{Z})_\mathbb{R} \xrightarrow{\cong} H_k(K(\mathbb{Z}^b,k);\mathbb{Z}). \]
To see this, note that by the canonical isomorphism
\[ [M, K(\mathbb{Z}^b,k) ] \cong H^k (M;\mathbb{Z})^b \]
it suffices to choose classes $\beta_1,\ldots,\beta_b \in H^k(M;\mathbb{Z})$ that represent a basis of $H^k(M;\mathbb{Z})_\mathbb{R}$. This choice corresponds to a map
\[ \Phi:M\to K(\mathbb{Z}^b,k) \]
such that the canonical basis $\delta_1,\ldots,\delta_b$ of $H^k(K(\mathbb{Z}^b,k);\mathbb{Z})$ is pulled back to $\beta_i=\Phi^*\delta_i$. Thus, $\Phi$ induces an isomorphism on $k$-dimensional cohomology modulo torsion and consequently also on the integral lattices of $k$-dimensional homology. Note however that the homotopy class of this map is not uniquely determined by the isomorphism $H_k(M;\mathbb{Z})_\mathbb{R} \cong H_k(K(\mathbb{Z}^b,k);\mathbb{Z})$ except when $H^k(M;\mathbb{Z})$ is torsion-free, which happens for example if $k=1$.

As in \cite{Brunnbauer(2007b)} for asymptotic invariants and the (stable) $1$-systolic constant, comparison and extension techniques will allow us to prove homological invariance of stable systolic constants.

\begin{thm}\label{homol_invar_thm}
Let $M$ and $N$ be two connected closed orientable manifolds of dimension $n$, and let $1\leq k\leq n-1$. Suppose that $b_k(M) = b_k(N) =:b$ and that there are maps $\Phi:M\to K(\mathbb{Z}^b,k)$ and $\Psi:N\to K(\mathbb{Z}^b,k)$ such that the induced homomorphisms on the integral lattices of $k$-dimensional homology are bijective and such that 
\[ \Phi_*[M]_\mathbb{Z} = \Psi_*[N]_\mathbb{Z} \in H_n(K(\mathbb{Z}^b,k);\mathbb{R}). \]
Then the stable $k$-systolic constants coincide: $\sigma_k^{st}(M) = \sigma_k^{st}(N)$.
\end{thm}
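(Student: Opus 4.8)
The plan is to transport the comparison-and-extension machinery that \cite{Brunnbauer(2007b)} developed for the $1$-systolic constant and for asymptotic invariants to the stable $k$-systolic constant. By the symmetry of the hypotheses it suffices to prove one inequality, say $\sigma_k^{st}(M)\le\sigma_k^{st}(N)$. The first step is to reinterpret $\sigma_k^{st}$ as an invariant of the target data: for a path-connected CW complex $X$ carrying classes $\delta_1,\dots,\delta_b\in H^k(X;\mathbb{Z})$ and a class $h\in\operatorname{im}\bigl(H_n(X;\mathbb{Z})\to H_n(X;\mathbb{R})\bigr)$, put
\[ \sigma_k^{st}(X,h):=\inf\,\sigma_k^{st}(M'), \]
the infimum ranging over all connected closed orientable $n$-manifolds $M'$ with a map $\phi\colon M'\to X$ such that $\phi_*[M']_\mathbb{R}=h$ and $\phi^*\delta_1,\dots,\phi^*\delta_b$ represent a basis of $H^k(M';\mathbb{Z})_\mathbb{R}$. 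With $X=K(\mathbb{Z}^b,k)$ and $\delta_i$ the tautological classes this last condition is just the requirement that $\phi$ induce a bijection of the integral lattices in $k$-homology, so with $h:=\Phi_*[M]_\mathbb{R}$ both $(M,\Phi)$ and $(N,\Psi)$ are admissible for $(K(\mathbb{Z}^b,k),h)$. Hence $\sigma_k^{st}(K(\mathbb{Z}^b,k),h)\le\sigma_k^{st}(M)$ is automatic, and the theorem is equivalent to the statement that \emph{every} admissible representative attains the infimum.

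To prove that $M$ attains it, one links $M$ to an arbitrary competitor $N$ by two kinds of move. The \emph{comparison} principle is monotonicity under degree-one maps: if $f\colon A\to B$ is a degree-$\pm1$ map between closed orientable $n$-manifolds whose reference maps to $K(\mathbb{Z}^b,k)$ agree (i.e.\ $\Phi_B\circ f\simeq\Phi_A$), then $\sigma_k^{st}(A)\ge\sigma_k^{st}(B)$; for admissible representatives one in fact gets equality. The \emph{extension} principle provides concrete $\sigma_k^{st}$-preserving modifications of an admissible $(M',\Phi')$: (a) surgery on $M'$ along an embedded sphere in a dimension chosen so that the reference map extends over the trace and the $k$-dimensional homology lattice is left untouched — such a surgery is supported in an arbitrarily small region and does not interfere with $k$-cycles, hence changes volume and stable $k$-systole by less than any prescribed $\varepsilon$; and (b) connected sum $M'\# V$ with a closed orientable $V$ satisfying $H^k(V;\mathbb{R})=0$ and carrying a null-homotopic reference map — giving $V$ a metric of volume $<\varepsilon$ and joining it by a thin neck shows, since $V$ adds nothing to $H_k$, that $\sigma_k^{st}(M'\# V)\le\sigma_k^{st}(M')+\varepsilon$, while the pinch map $M'\# V\to M'$ together with the comparison principle gives the reverse bound. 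Combining these moves with the bordism-theoretic input that, after the usual bookkeeping (clearing torsion in $H_n(K(\mathbb{Z}^b,k);\mathbb{Z})$ by passing to multiples, realizing the difference in bordism data over a finite subcomplex), any two admissible representatives of $(K(\mathbb{Z}^b,k),h)$ can be connected through degree-one maps and the above modifications, one obtains $\sigma_k^{st}(M)=\sigma_k^{st}(N)$.

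The main obstacle is the comparison principle itself: showing that a degree-one map compatible with the reference maps cannot decrease the stable $k$-systolic constant of the domain is a genuine geometric statement (in the spirit of Gromov's filling-radius estimates), and it requires controlling the behaviour of the stable norm on the \emph{entire} integral lattice in $H_k$ under pulling a metric back along a map that need not be a submersion; this has to be established afresh for the stable $k$-systolic constant rather than quoted from the $1$-dimensional case. A secondary, and somewhat delicate, point is keeping every intermediate manifold \emph{admissible}: two representatives of the same $(K(\mathbb{Z}^b,k),h)$ may differ in homology away from degree $k$, and even in characteristic numbers that are invisible to $H_k$, so the sequence of modifications that reconciles them — surgeries away from dimension $k$, connected sums with $H^k$-trivial summands — must be arranged so as never to disturb the condition that the reference map be a bijection on the $k$-dimensional lattice. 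Finally, the thin-neck and shrinking estimates, though routine, have to be carried out quantitatively enough that the error accumulated over the finitely many moves can be sent to zero.
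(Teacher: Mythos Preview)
Your plan takes a genuinely different route from the paper, and its central step --- the assertion that any two admissible representatives of $(K(\mathbb{Z}^b,k),h)$ can be connected through degree-one maps, surgeries away from dimension $k$, and connected sums with $H^k$-trivial summands --- is the real content of the theorem, and you do not prove it. Two admissible pairs $(M,\Phi)$ and $(N,\Psi)$ need not be oriented-bordant over $K(\mathbb{Z}^b,k)$: the hypothesis pins down $\Phi_*[M]$ only in $H_n(\,\cdot\,;\mathbb{R})$, which is far weaker than fixing the class in $\Omega_n^{SO}(K(\mathbb{Z}^b,k))$. Surgeries do not change the bordism class at all (the trace is a bordism), and connected sums with a constantly-mapped $V$ move it only within a coset of the image of $\Omega_n^{SO}(\mathrm{pt})$; the residual difference, sitting in higher Atiyah--Hirzebruch filtration, is precisely what your ``usual bookkeeping'' would have to kill, and no mechanism is offered. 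Degree-one maps do jump between bordism classes, but then you must \emph{produce} such maps between specific manifolds, which is again an existence problem you do not address. There is also a directional slip: a degree-one map $f\colon A\to B$ that is $(n,1)$-monotone and bijective on the $H_k$-lattice gives $\sigma_k^{st}(A)\le\sigma_k^{st}(B)$ via the pull-back argument, not $\ge$ as you write, so the claimed ``equality for admissible representatives'' is exactly the statement to be proved, not an input.

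The paper sidesteps all of this by leaving the category of manifolds. It first passes from $K(\mathbb{Z}^b,k)$ to the torsion-free target $L_k^b$ via Lemma~\ref{map_loopspace_sphere}, so that the real equality $\Phi_*[M]_\mathbb{Z}=\Psi_*[N]_\mathbb{Z}$ becomes an integral one; the price is that the map on $H_k$-lattices is now multiplication by some $r>0$. Replacing $f\circ\Phi$ by the inclusion of $M$ into (a modified) $L_k^b$ via the mapping cylinder, Lemma~\ref{n,1-monotone} homotopes $f\circ\Psi$ to an $(n,1)$-monotone map $\Psi'\colon N\to X:=M\cup(L_k^b)^{(n-1)}$. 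Here $X$ is a finite simplicial complex obtained from $M$ by attaching cells of dimension $\le n-1$, and the extension axiom (Lemma~\ref{ext_ax}), which is formulated and proved for complexes rather than manifolds, gives $\sigma_k^{st}(X)=r^{n/k}\sigma_k^{st}(M)$. The comparison axiom applied to $\Psi'$ then yields $\sigma_k^{st}(N)\le r^{-n/k}\sigma_k^{st}(X)=\sigma_k^{st}(M)$, and symmetry finishes. In short, the entire bordism-and-surgery connectivity problem you outline is replaced by a single topological lemma (Lemma~\ref{n,1-monotone}) together with the willingness to evaluate $\sigma_k^{st}$ on non-manifold complexes.
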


As a direct consequence two manifolds have the same stable $k$-systolic constants whenever there exists a degree one mapping between them that induces an isomorphism of the integral lattices of $k$-dimensional homology.

Let $M$ be a connected closed oriented manifold of dimension $n=kp$. Consider the \emph{multilinear intersection form}
\begin{align*}
Q_M^k: (H^k(M;\mathbb{Z})_\mathbb{R})^p &\to \mathbb{Z} \\
(\beta_1,\ldots,\beta_p) &\mapsto \langle\beta_1\smile\cdots\smile\beta_p,[M]_\mathbb{Z} \rangle.
\end{align*}
By Theorem \ref{stabsys_thm}, this form vanishes identically if and only if $\sigma^{st}_k(M)=0$.

Using the computation of the real cohomology ring of the Eilenberg-Mac\,Lane space $K(\mathbb{Z},k)$ by Cartan and Serre, we are able to derive the following corollary of Theorem \ref{homol_invar_thm}.

\begin{cor}\label{intersection_form}
Let $M$ and $N$ be two connected closed orientable manifolds of dimension $n=kp$. If the multilinear intersection forms $Q^k_M$ and $Q^k_N$ are equivalent over $\mathbb{Z}$, then
\[ \sigma_k^{st}(M) = \sigma_k^{st}(N). \]
\end{cor}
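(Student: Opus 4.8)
The plan is to deduce the corollary from Theorem~\ref{homol_invar_thm} by translating an equivalence of multilinear intersection forms into an equality of fundamental-class images in $K(\mathbb{Z}^b,k)$, the translation being governed by the Cartan--Serre computation of the real cohomology ring of $K(\mathbb{Z},k)$.

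First I would record what the hypothesis gives: an equivalence $Q^k_M \cong_{\mathbb Z} Q^k_N$ includes a $\mathbb{Z}$-module isomorphism of the domains, so $b_k(M)=b_k(N)=:b$, and it furnishes an isomorphism $\phi: H^k(M;\mathbb{Z})_{\mathbb{R}} \xrightarrow{\cong} H^k(N;\mathbb{Z})_{\mathbb{R}}$ of the integral lattices with $Q^k_N(\phi v_1,\dots,\phi v_p)=Q^k_M(v_1,\dots,v_p)$ for all $v_i$. Next I would choose integral classes $\beta_1,\dots,\beta_b\in H^k(M;\mathbb{Z})$ whose residues form a basis of $H^k(M;\mathbb{Z})_{\mathbb{R}}$ and integral classes $\gamma_1,\dots,\gamma_b\in H^k(N;\mathbb{Z})$ lifting $\phi(\beta_1),\dots,\phi(\beta_b)$; as recalled in the introduction, these choices determine maps $\Phi:M\to K(\mathbb{Z}^b,k)$ and $\Psi:N\to K(\mathbb{Z}^b,k)$ with $\Phi^*\delta_i=\beta_i$ and $\Psi^*\delta_i=\gamma_i$, each inducing a bijection of the integral lattices in $k$-dimensional homology.

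The core step is to compare $\Phi_*[M]_{\mathbb{Z}}$ and $\Psi_*[N]_{\mathbb{Z}}$ in $H_n(K(\mathbb{Z}^b,k);\mathbb{R})$. Since this group is dual to $H^n(K(\mathbb{Z}^b,k);\mathbb{R})$, it suffices to check the pairing against a spanning set of the latter. For any multi-index one has
\[ \langle \delta_{i_1}\smile\cdots\smile\delta_{i_p},\,\Phi_*[M]_{\mathbb{Z}}\rangle=\langle \beta_{i_1}\smile\cdots\smile\beta_{i_p},\,[M]_{\mathbb{Z}}\rangle=Q^k_M(\beta_{i_1},\dots,\beta_{i_p}), \]
and symmetrically $\langle \delta_{i_1}\smile\cdots\smile\delta_{i_p},\,\Psi_*[N]_{\mathbb{Z}}\rangle=Q^k_N(\gamma_{i_1},\dots,\gamma_{i_p})$; these coincide because $\gamma_i\equiv\phi(\beta_i)$ modulo torsion and $\phi$ intertwines the forms. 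To conclude that this forces $\Phi_*[M]_{\mathbb{Z}}=\Psi_*[N]_{\mathbb{Z}}$ I would invoke Cartan--Serre: $H^*(K(\mathbb{Z},k);\mathbb{R})$ is the free graded-commutative $\mathbb{R}$-algebra on the degree-$k$ fundamental class (polynomial if $k$ is even, exterior if $k$ is odd), hence by the K\"unneth formula $H^*(K(\mathbb{Z}^b,k);\mathbb{R})$ is freely generated by $\delta_1,\dots,\delta_b$, and in degree $n=kp$ it is spanned by the $p$-fold products $\delta_{i_1}\smile\cdots\smile\delta_{i_p}$. Then Theorem~\ref{homol_invar_thm} applies and gives $\sigma^{st}_k(M)=\sigma^{st}_k(N)$.

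I expect the main obstacle to be this last piece of cohomological bookkeeping: one must be sure the Cartan--Serre description is applied correctly --- in particular that it really does say the degree-$n$ real cohomology of $K(\mathbb{Z}^b,k)$ is generated by $p$-fold products of the $\delta_i$, so that an element of $H_n$ is pinned down by the scalars $Q^k(\cdots)$ --- and that the parity of $k$, which dictates whether $Q^k$ is symmetric or alternating, is treated consistently on the $M$-side and the $N$-side. Everything else is a routine unwinding of definitions together with the construction of $\Phi$ and $\Psi$ already given in the introduction.
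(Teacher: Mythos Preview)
Your proposal is correct and follows essentially the same argument as the paper: choose $\Phi$ and $\Psi$ so that the induced isomorphism $\Psi^*\circ(\Phi^*)^{-1}$ on the integral lattices realizes the given equivalence of forms, then use the Cartan--Serre description of $H^*(K(\mathbb{Z}^b,k);\mathbb{R})$ to conclude that the Kronecker pairings with $p$-fold cup products of the $\delta_i$ determine $\Phi_*[M]_\mathbb{Z}$ and $\Psi_*[N]_\mathbb{Z}$ in $H_n(K(\mathbb{Z}^b,k);\mathbb{R})$, and apply Theorem~\ref{homol_invar_thm}. The only difference is cosmetic --- you spell out the choice of bases and lifts a bit more explicitly than the paper does.
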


In the next section, we will give an axiomatic approach to stable systolic constants. This will be used to prove Theorem \ref{homol_invar_thm} in section \ref{homol_invar}. In section \ref{top_lemmata}, we recall two lemmata from \cite{Brunnbauer(2007b)} that are essential for various proofs in the later sections. Theorems \ref{stabsys_thm} and \ref{freedom} will be proved in section \ref{existence} using some topological facts on spheres and their loop spaces that are presented in section \ref{spheres_loop_sp}.

The remaining case of nonorientable manifolds and stable $1$-systoles was already investigated in \cite{Brunnbauer(2007b)}. The respective results will be mentioned in the different sections.

As general references to systolic geometry, we would like to mention chapter 7.2 of Berger's book \cite{Berger(2003)}, the survey article \cite{CK(2003)}, and the book \cite{Katz(2007)}.

\begin{ackn*}
I would like to thank D.\,Kotschick for his continuous advice and help. I am also grateful to M.\,Katz for bringing my attention to stable systolic inequalities and for many useful remarks. Moreover, I thank the anonymous referee for her/his helpful remarks. Financial support from the \emph{Deutsche Forschungsgemeinschaft} is gratefully acknowledged.
\end{ackn*}

\section{Axioms for stable systolic constants}

Using continuous piecewise smooth Riemannian metrics, the definition of stable systoles extends to connected finite simplicial complexes, see \cite{Babenko(2002)}, section 2. Hence, one may define stable systolic constants for simplicial complexes, which for triangulated manifolds coincide with the ones defined above. 

\begin{defn}
A simplicial map $f:X\to Y$ between simplicial complexes of dimension $n$ will be called \emph{$(n,d)$-monotone} if the preimage of every open $n$-simplex of $Y$ consists of at most $d$ open $n$-simplices in $X$.
\end{defn}

Consider the following two axioms for real-valued invariants $I$ of connected finite simplicial complexes.

\begin{comp_ax*}
Let $X$ and $Y$ be two connected finite simplicial complexes of dimension $n$. If there exists an $(n,d)$-monotone map $f:X\to Y$ such that the induced homomorphism $f_*:H_k(X;\mathbb{Z})_\mathbb{R}\hookrightarrow H_k(Y;\mathbb{Z})_\mathbb{R}$ is injective and the image $f_*(H_k(X;\mathbb{Z})_\mathbb{R})$ is contained in $r\cdot H_k(Y;\mathbb{Z})_\mathbb{R}$ for a positive integer $r$, then
\[ I(X) \leq d/r^{n/k} \cdot I(Y). \]
\end{comp_ax*}

\begin{ext_ax*}
Let $X$ be a connected finite $n$-dimensional simplicial complex, and let $X'$ be an \emph{extension} of $X$, i.\,e.\ $X'$ is obtained from $X$ by attachment of finitely many cells of dimension $1\leq \ell \leq n-1$ such that the inclusion $X\hookrightarrow X'$ induces the composition of a split monomorphism $H_k(X;\mathbb{Z})_\mathbb{R}\hookrightarrow H_k(X';\mathbb{Z})_\mathbb{R}$ with multiplication by some positive integer $r$. Then
\[ I(X') = r^{n/k}\cdot I(X). \]
\end{ext_ax*}

We will prove that both axioms are satisfied for the stable $k$-systolic constant. Analogous statements were shown in \cite{Babenko(2006)} for the $1$-systolic constant, in \cite{Sabourau(2006)} for the minimal entropy, and in \cite{Brunnbauer(2007b)} for the spherical volume. Similar ideas may be found in various papers on systolic invariants under the keywords `meromorphic map' or `$(n,k)$-morphism'. (See for example \cite{BK(1998)}, \cite{BKS(1998)}, \cite{KS(1999)}, and \cite{KS(2001)}.)

\begin{lem}
The comparison axiom holds for $I=\sigma^{st}_k$.
\end{lem}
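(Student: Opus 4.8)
The plan is to take an arbitrary Riemannian metric $g$ on $Y$, pull it back (after a small perturbation to make it nondegenerate) to a metric $f^*g$ on $X$ via the $(n,d)$-monotone map $f$, and then compare the two sides of the systolic inequality. First I would recall that an $(n,d)$-monotone simplicial map $f\colon X\to Y$ has the property that $\Vol(X,f^*g)\leq d\cdot\Vol(Y,g)$: indeed, over each open $n$-simplex of $Y$ the preimage consists of at most $d$ open $n$-simplices, and the pullback of the volume form integrates to at most $d$ times the volume of the image simplex; the lower-dimensional skeleton contributes nothing to the $n$-dimensional volume. (One must be slightly careful because $f^*g$ is only positive semidefinite, being degenerate where $f$ collapses simplices; the standard fix is to perturb to $f^*g+\varepsilon g_0$ for a fixed background metric $g_0$ and a simplicial map $f$, then let $\varepsilon\to 0$ at the end — this is exactly the kind of argument used in \cite{Babenko(2002)} for piecewise smooth metrics on complexes.)

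Next I would analyze the stable $k$-systole. For any Lipschitz $k$-cycle $c$ in $X$ representing a nonzero class $\alpha\in H_k(X;\mathbb Z)_\mathbb R$, the pushforward $f_\#c$ is a Lipschitz $k$-cycle in $Y$ with $\Vol_k(f_\# c,g)\leq \Vol_k(c,f^*g)$, since $f$ is $1$-Lipschitz from $(X,f^*g)$ to $(Y,g)$. Because $f_*\colon H_k(X;\mathbb Z)_\mathbb R\hookrightarrow H_k(Y;\mathbb Z)_\mathbb R$ is injective and lands in $r\cdot H_k(Y;\mathbb Z)_\mathbb R$, the class $f_*\alpha$ is a nonzero element of the integral lattice of $H_k(Y;\mathbb R)$ that is divisible by $r$, so $\tfrac1r f_*\alpha$ is again a nonzero lattice class and hence $\|f_*\alpha\|_Y\geq r\cdot\stabsys_k(Y,g)$. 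Combining, $\|\alpha\|_{f^*g}\geq \|f_*\alpha\|_g\geq r\cdot\stabsys_k(Y,g)$, and taking the infimum over nonzero lattice classes $\alpha$ gives $\stabsys_k(X,f^*g)\geq r\cdot\stabsys_k(Y,g)$.

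Putting the two estimates together yields
\[
\frac{\Vol(X,f^*g)}{\stabsys_k(X,f^*g)^{n/k}}\leq \frac{d\cdot\Vol(Y,g)}{r^{n/k}\cdot\stabsys_k(Y,g)^{n/k}},
\]
and since this holds for every $g$ on $Y$ (up to the $\varepsilon$-perturbation, which only costs an arbitrarily small amount), taking the infimum over $g$ gives $\sigma^{st}_k(X)\leq (d/r^{n/k})\,\sigma^{st}_k(Y)$, as desired. The edge case where $H_k(X;\mathbb Z)_\mathbb R=0$ is handled by convention ($\sigma^{st}_k(X)=0$), and if $H_k(Y;\mathbb Z)_\mathbb R=0$ then injectivity of $f_*$ forces $H_k(X;\mathbb Z)_\mathbb R=0$ as well, so the inequality is trivially $0\leq 0$.

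The main obstacle I expect is the technical handling of the degenerate pullback metric $f^*g$: one needs the definition of $\stabsys_k$ and $\Vol$ to extend sensibly to continuous piecewise smooth (possibly degenerate) metrics on simplicial complexes, and to verify that the perturbation $f^*g+\varepsilon g_0$ simultaneously controls the volume from above (it converges to $\le d\cdot\Vol(Y,g)$) and the stable systole from below (lower semicontinuity under $\varepsilon\to 0$, using that $f$ remains $(1+o(1))$-Lipschitz). This is precisely the setup of \cite{Babenko(2002)}, section 2, so I would cite that for the extension of stable systolic invariants to complexes and for the permissibility of such perturbations, and keep the perturbation argument brief.
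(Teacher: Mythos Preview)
Your proof is correct and follows essentially the same route as the paper: the paper also perturbs the degenerate pullback to $g_1^t := f^*g_2 + t^2 g_1$, uses $(n,d)$-monotonicity to bound $\Vol(X,g_1^t)\leq d\cdot\Vol(Y,g_2)+\varepsilon$, and then uses that $f$ is $1$-Lipschitz together with injectivity of $f_*$ and the inclusion $f_*(H_k(X;\mathbb{Z})_\mathbb{R})\subset r\cdot H_k(Y;\mathbb{Z})_\mathbb{R}$ to obtain $\stabsys_k(X,g_1^t)\geq r\cdot\stabsys_k(Y,g_2)$. Your discussion is somewhat more explicit about the cycle-level inequality and the edge cases, but the argument is the same.
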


See e.\,g.\ \cite{Babenko(2002)}, Proposition 2.2.7 for a similar argument. There, a kind of systolic freedom (i.\,e.\ the vanishing of a suitably defined systolic constant) is pulled back. Here, the stable systolic constant of $Y$ may also be nonzero. The used pullback technique goes back to \cite{Babenko(1992)}, Proposition 2.2.

\begin{proof}
Choose continuous piecewise smooth Riemannian metrics $g_1$ and $g_2$ on $X$ and $Y$ respectively. Then
\[ g_1^t := f^*g_2 + t^2 g_1 \]
with $t>0$ is again a Riemannian metric on $X$. Choosing $t>0$ small enough it can be arranged that 
\[ \Vol(X,g_1^t) \leq d\cdot \Vol(Y,g_2)+\varepsilon \]
for any given $\varepsilon>0$. Moreover,
\[ f:(X,g_1^t) \to (Y,g_2) \]
is $1$-Lipschitz and thus decreases the stable norm. Since $f_*:H_k(X;\mathbb{Z})_\mathbb{R}\hookrightarrow H_k(Y;\mathbb{Z})_\mathbb{R}$ is injective and since $f_*(H_k(X;\mathbb{Z})_\mathbb{R})\subset r\cdot H_k(Y;\mathbb{Z})_\mathbb{R}$, it follows that
\[ \stabsys_k(X,g_1^t) \geq r\cdot \stabsys_k(Y,g_2) \]
by the fact that the stable norm is a norm. Therefore, $\sigma_k^{st}(X) \leq d/r^{n/k}\cdot \sigma_k^{st}(Y)$.
\end{proof}

\begin{lem}\label{ext_ax}
The stable $k$-systolic constant satisfies the extension axiom.
\end{lem}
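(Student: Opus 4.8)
The plan is to prove the extension axiom for $\sigma_k^{st}$ in two inequalities, one for each direction of the desired equality $\sigma_k^{st}(X') = r^{n/k}\cdot\sigma_k^{st}(X)$. First I would recast the extension $X \hookrightarrow X'$ as an instance of the comparison axiom wherever possible: since the inclusion $\iota: X\hookrightarrow X'$ is $(n,1)$-monotone (attaching cells of dimension $\leq n-1$ adds no new $n$-simplices after a subdivision, so the preimage of an open $n$-simplex of $X'$ is a single open $n$-simplex of $X$ or empty) and induces on $H_k(\,\cdot\,;\mathbb{Z})_\mathbb{R}$ the composite of a split monomorphism with multiplication by $r$, the comparison axiom already gives $\sigma_k^{st}(X) \leq 1/r^{n/k}\cdot\sigma_k^{st}(X')$, i.e. the inequality $\sigma_k^{st}(X') \geq r^{n/k}\cdot\sigma_k^{st}(X)$. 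One subtlety here is that simplicial approximation / subdivision is needed to make $\iota$ literally simplicial and $(n,1)$-monotone; I would remark that subdivision does not change any of the invariants or homology maps involved.

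The reverse inequality $\sigma_k^{st}(X') \leq r^{n/k}\cdot\sigma_k^{st}(X)$ is the substantive half. Here I would argue metrically: given a continuous piecewise smooth metric $g$ on $X$, I want to build a metric $g'$ on $X'$ with $\Vol(X',g')$ as close to $\Vol(X,g)$ as desired and with $\stabsys_k(X',g') \geq r\cdot\stabsys_k(X,g)$ (perhaps up to an $\varepsilon$). The attached cells have dimension $\ell$ with $1\leq\ell\leq n-1$, so one can make them arbitrarily thin and short, contributing negligible $n$-volume; the point of the dimension restriction $\ell\leq n-1$ is exactly that cells of dimension $n$ would force positive volume. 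For the systole bound I would use that $H_k(X;\mathbb{Z})_\mathbb{R}\hookrightarrow H_k(X';\mathbb{Z})_\mathbb{R}$ is split: let $\rho: H_k(X';\mathbb{Z})_\mathbb{R}\to H_k(X;\mathbb{Z})_\mathbb{R}$ be a splitting with $\rho\circ\iota_* = r\cdot\mathrm{id}$. A class in $H_k(X';\mathbb{Z})_\mathbb{R}$ is either (a detectable multiple of) a class pushed forward from $X$, or it maps into $r\cdot H_k(X;\mathbb{Z})_\mathbb{R}$ under $\rho$; combined with the fact that $\iota$ can be arranged $1$-Lipschitz and that cycles in $X'$ can be retracted onto $X$ with controlled increase of volume (using that a small tubular neighborhood of the attached cells deformation retracts onto $X$ and has small $k$-volume), one gets that the stable norm of any integral class in $X'$ is at least $r$ times the stable norm of a corresponding class in $X$. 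This yields $\sigma_k^{st}(X')\leq r^{n/k}\cdot\sigma_k^{st}(X)$.

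The main obstacle I anticipate is controlling the stable norm on $X'$ from below by that on $X$, because the stable norm is an infimum over \emph{real} Lipschitz cycles, and such cycles in $X'$ can wander through the attached cells and need not be homologous, inside $X$, to anything of comparable volume. Two ingredients handle this. First, one needs that the attaching maps can be taken to be Lipschitz (true after simplicial approximation) so that a thin mapping cylinder neighborhood of each attached $\ell$-cell $e^\ell$ deformation retracts onto its image in $X$ with Lipschitz constant close to $1$ in the retracted directions and arbitrarily large only in the $(\ell+1-\ell)=1$ collapsing direction whose total extent is small; since $k\leq n-1$ and the attached cells are lower-dimensional, pushing a $k$-cycle off the cells increases $k$-volume by an amount controlled by the (small) size of the neighborhood, hence negligible. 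Second, one needs the algebraic input that the inclusion is split with cokernel-free obstruction, so that the detected lattice of $X'$ is, up to the factor $r$, the lattice of $X$; here I would cite the hypothesis of the extension axiom directly. I would organize the write-up as: (1) reduce one inequality to the comparison axiom; (2) construct the thin metric $g'$ on $X'$ from $g$; (3) prove the retraction/volume estimate for $k$-cycles off the attached cells; (4) combine with the splitting to get the systole bound; (5) let $\varepsilon\to 0$ and the cell-thickness $\to 0$ to conclude.
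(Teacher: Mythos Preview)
Your reduction of the inequality $\sigma_k^{st}(X)\leq r^{-n/k}\sigma_k^{st}(X')$ to the comparison axiom is exactly what the paper does. The problem lies entirely in your argument for the reverse inequality.

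The central claim---that a ``small tubular neighborhood of the attached cells deformation retracts onto $X$'' so that $k$-cycles in $X'$ can be pushed into $X$ with negligible increase of $k$-volume---is false. When you attach an $\ell$-cell $D^\ell$ along $h:S^{\ell-1}\to X$, there is in general \emph{no} retraction of $X'$ (or of any neighborhood of the closed cell) onto $X$; the radial retraction of $D^\ell\setminus\{0\}$ onto $S^{\ell-1}$ has unbounded Lipschitz constant near the center, and your remark that the blow-up occurs ``only in the collapsing direction whose total extent is small'' does not control the $k$-volume of the image of a cycle that lingers near the center. Making the cell ``thin and short'' does not help: a near-minimizing real cycle can intersect the cell in a set of essentially arbitrary combinatorial complexity, and there is no a priori bound on how much mass a retraction would create. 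The paper resolves this by the opposite geometric move: it inserts a \emph{long} cylinder $S^{\ell-1}\times[0,L]$ between $X$ and the cap of the cell, applies the coarea inequality to the projection onto $[0,L]$ to find a slice $c_{t_0}$ with $(k-1)$-volume at most $\Vol_k(c)/L$, fills $c_{t_0}$ inside $S^{\ell-1}\times\{t_0\}$ by the isoperimetric inequality for small cycles, and thereby replaces $c$ by a homologous cycle lying in $X$ together with a collar that genuinely does retract $1$-Lipschitz onto $X$. Letting $L\to\infty$ gives the systole comparison. A second ingredient you are missing is that one must argue cell by cell, but at intermediate stages the map on $H_k(\,\cdot\,;\mathbb{Z})_\mathbb{R}$ need not be a split monomorphism (attaching a $k$-cell can raise $b_k$, and a later $(k+1)$-cell can lower it again); the paper handles this by introducing a relative invariant $\sigma_{\phi,k}^{st}$ tied to a fixed homomorphism $\phi:H_k\to\mathbb{Z}^b$, which is stable under the induction.
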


In \cite{BKSW(2006)}, Proposition 7.3, this is proved for a special case. Note also section 10 of the cited paper. With some adjustments the proof carries over to the general case. A similar argument was first used in \cite{BK(1998)}, Lemma 6.1. 

\begin{proof}
The inclusion $i:X\hookrightarrow X'$ is $(n,1)$-monotone, induces a monomorphism on the integral lattices of $k$-dimensional homology, and $i_*(H_k(X;\mathbb{Z})_\mathbb{R})\subset r\cdot H_k(X';\mathbb{Z})_\mathbb{R}$, hence $\sigma_k^{st}(X) \leq 1/r^{n/k}\cdot \sigma_k^{st}(X')$ by the comparison axiom.

To prove the converse inequality, we want to use induction over the number of attached cells. But it may happen that the attachment of some $k$-cells increases the $k$-th Betti number and that later on the attachment of some $(k+1)$-cells decreases it again. Then the induced homomorphism on real $k$-dimensional homology may be injective on the whole but it is not injective at every induction step. Therefore, it is useful to introduce the following `relative' version of the stable $k$-systolic constant.

\begin{defn}
Let $b$ be a positive integer, and let $\phi:H_k(X;\mathbb{Z})_\mathbb{R}\to \mathbb{Z}^b$ be a homomorphism. The induced homomorphism $H_k(X;\mathbb{R})\to\mathbb{R}^b$ will also be denoted by $\phi$. For a metric $g$ on $X$ the \emph{stable $(\phi,k)$-systole} $\stabsys_{\phi,k}(X,g)$ is defined as the minimum of the quotient norm of the stable norm on the nonzero elements of the lattice $\mathbb{Z}^b$ in $\mathbb{R}^b$. The \emph{stable $(\phi,k)$-systolic constant} is given by
\[ \sigma_{\phi,k}^{st}(X) := \inf_g \frac{\Vol(X,g)}{\stabsys_{\phi,k}(X,g)^{n/k}}. \]
\end{defn}

Note that for $\phi$ a split monomorphism, this definition coincides with the original `absolute' one.

Now, an \emph{extension} $(X',\phi')$ of $(X,\phi)$ consists of a simplicial complex $X'$ that is obtained from $X$ by attaching finitely many cells of dimension $1\leq\ell\leq n-1$ and of a homomorphism $\phi':H_k(X';\mathbb{Z})_\mathbb{R}\to \mathbb{Z}^b$ such that $\phi=\phi'\circ i_*$ with $i:X\hookrightarrow X'$ the inclusion. We will prove the following relative version of the extension axiom. 

\begin{claim*}
If $(X',\phi')$ is an extension of $(X,\phi)$, then $\sigma_{\phi',k}^{st}(X') \leq \sigma_{\phi,k}^{st}(X)$. 
\end{claim*}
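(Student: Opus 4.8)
The plan is to reduce the claim to a single cell attachment and then argue by induction on the number of attached cells. So suppose first that $X'$ is obtained from $X$ by attaching one cell $e^\ell$ with $1\leq\ell\leq n-1$, and let $(X',\phi')$ be an extension of $(X,\phi)$. Given a continuous piecewise smooth metric $g$ on $X$, I want to build a metric $g'$ on $X'$ whose volume exceeds $\Vol(X,g)$ by an arbitrarily small amount while not decreasing the stable $(\phi',k)$-systole. The natural construction is to take a very thin ``hair'' or ``tube'' neighbourhood modelling the attached cell: realize $X'$ up to homeomorphism as $X$ with a handle-like cell of dimension $\ell$ glued along its boundary sphere, and put on the cell a metric that is a long thin product (a tiny $\ell$-disc of radius $\epsilon$) interpolating to $g$ near the attaching locus. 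Its volume is $O(\epsilon)$, so $\Vol(X',g')\leq\Vol(X,g)+\varepsilon$ for any prescribed $\varepsilon>0$.

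The heart of the matter is the systolic estimate $\stabsys_{\phi',k}(X',g')\geq\stabsys_{\phi,k}(X,g)$, equivalently the statement that for every nonzero $v\in\mathbb Z^b$ the quotient stable norm computed in $(X',g')$ is at least the one computed in $(X,g)$. The key point is that the retraction $r\colon X'\to X$ (collapsing the thin cell onto $X$) can be made $1$-Lipschitz for a suitable choice of the cell metric, since $\ell\leq n-1<\infty$ forces no volume constraint and the cell is attached along a map from a sphere into $X$; pushing the metric out along a collar makes $r$ distance-nonincreasing. Then for any real Lipschitz $k$-cycle $c'$ in $X'$ representing a homology class $\alpha'$ with $\phi'(\alpha')=v$, the pushforward $r_*c'$ is a cycle in $X$ of no greater volume, representing $r_*\alpha'$, and since $r\circ i=\mathrm{id}$ we have $\phi(r_*\alpha')=\phi'(i_*r_*\alpha')$; using that $\phi'=\phi\circ(i_*)^{-1}$ on the image and that $\phi'$ factors through $r_*$ up to the split structure, one checks $\phi(r_*\alpha')=v$ as well. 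Taking infima over representatives gives the inequality of quotient norms, hence of stable $(\phi,k)$-systoles, and dividing, $\sigma_{\phi',k}^{st}(X')\leq\sigma_{\phi,k}^{st}(X)$. The subtle bookkeeping is to verify that $\phi'$ composed with $r_*$ genuinely recovers $\phi$ on the relevant classes, which is where the hypothesis $\phi=\phi'\circ i_*$ and the fact that $i_*$ is (the relevant multiple of) a split monomorphism on the lattice are used.

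For the general case I would induct on the number $m$ of attached cells, ordering them by increasing dimension is \emph{not} required — the relative formulation is designed precisely so that injectivity need not hold at intermediate stages. If $X'$ is built from $X$ by attaching cells $e_1,\dots,e_m$, let $X_j:=X\cup e_1\cup\cdots\cup e_j$, so $X=X_0\subset X_1\subset\cdots\subset X_m=X'$ and each $X_{j+1}$ is a single-cell extension of $X_j$. The given homomorphism $\phi'\colon H_k(X';\mathbb Z)_{\mathbb R}\to\mathbb Z^b$ restricts (via the inclusions $\iota_j\colon X_j\hookrightarrow X'$) to homomorphisms $\phi_j:=\phi'\circ(\iota_j)_*\colon H_k(X_j;\mathbb Z)_{\mathbb R}\to\mathbb Z^b$, with $\phi_0=\phi$ and $\phi_m=\phi'$, and by construction $(X_{j+1},\phi_{j+1})$ is an extension of $(X_j,\phi_j)$ in the sense defined above. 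Applying the single-cell case $m$ times yields
\[
\sigma_{\phi',k}^{st}(X')=\sigma_{\phi_m,k}^{st}(X_m)\leq\sigma_{\phi_{m-1},k}^{st}(X_{m-1})\leq\cdots\leq\sigma_{\phi_0,k}^{st}(X_0)=\sigma_{\phi,k}^{st}(X),
\]
which is the claim.

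I expect the main obstacle to be the single-cell step, specifically producing a $1$-Lipschitz retraction $r\colon X'\to X$ whose existence as a map of simplicial (or piecewise smooth) metric spaces is compatible with the thin-cell metric — one must choose the metric on the attached cell and its collar carefully so that $r$ does not increase distances, and simultaneously keep the added volume below $\varepsilon$; the interplay of these two requirements (thin enough for small volume, ``collared'' enough for $1$-Lipschitz retraction) is the only genuinely delicate estimate, and it is essentially the argument of \cite{BK(1998)}, Lemma 6.1 and \cite{BKSW(2006)}, Proposition 7.3 adapted to arbitrary attaching maps. The homological bookkeeping with $\phi$, $\phi'$, $i_*$ and $r_*$ is routine once one records that $r_*$ is a left inverse to $i_*$ on homology and that everything is compatible with the multiplication-by-$r$ and split-monomorphism structure built into the definition of an extension.
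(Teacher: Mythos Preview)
Your induction framework reducing to a single cell attachment is correct and matches the paper. The single-cell step, however, has a genuine gap: the retraction $r\colon X'\to X$ you invoke does not exist in general. If $X' = X\cup_h D^\ell$ and $r\colon X'\to X$ were a retraction, then $r|_{D^\ell}$ would extend the attaching map $h\colon S^{\ell-1}\to X$ over the disc, forcing $h$ to be null-homotopic in $X$. Since cells are typically attached along essential maps, no such $r$ exists and the pushforward $r_*c'$ is simply undefined. No choice of metric on the cell can repair this; it is a purely topological obstruction.

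The paper circumvents this by first modifying the cycle so that it lies in a subregion that \emph{does} retract onto $X$. It equips the attached cell with a long cylinder $S^{\ell-1}\times[0,L]$ capped by a hemisphere. Given a near-minimizing $k$-cycle $c$ representing $\alpha'$, the coarea formula applied to the projection onto $[0,L]$ produces a slice $c_{t_0}$ with $(k{-}1)$-volume at most $\Vol_k(c)/L$; the isoperimetric inequality for small cycles in the round sphere $S^{\ell-1}$ then fills this slice by a $k$-chain $d$ of controlled volume. Cutting $c$ at $t_0$ and capping with $d$ yields a cycle $c'$ homologous to $c$ (the discarded piece $c_-\cup(-d)$ lies in the contractible cell) and contained in $X\cup(S^{\ell-1}\times[-1,t_0])$. \emph{This} region retracts nonexpandingly onto $X$, and the image $c''$ represents a class $\alpha$ with $i_*\alpha=\alpha'$, hence $\phi(\alpha)=\phi'(\alpha')$. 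Letting $L\to\infty$ kills the multiplicative error $(1+C_{R,\ell}/L)$. The cut-and-cap step via coarea and isoperimetry is precisely the idea missing from your proposal.

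A minor aside: since $\ell\leq n-1$, the attached cell has $n$-dimensional measure zero under any metric, so $\Vol(X',g')=\Vol(X,g)$ exactly; your thin-hair volume estimate is unnecessary.
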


In fact equality holds because a relative version of the comparison axiom is also fulfilled. Moreover, this claim implies that $\sigma_k^{st}$ satisfies the original extension axiom: taking $\phi'$ as an isomorphism and $\phi$ as the composition $\phi'\circ i_*$ one gets
\[ \sigma_k^{st}(X') = \sigma_{\phi',k}^{st}(X') \leq \sigma_{\phi,k}^{st}(X). \]
Furthermore, $\stabsys_k(X,g) = r\cdot\stabsys_{\phi,k}(X,g)$ since the stable norm is a norm. Therefore, $\sigma_{\phi,k}^{st}(X) = r^{n/k}\cdot\sigma_k^{st}(X)$ and the extension axiom follows.

With the relative version of extension it is possible to proceed by induction over the number of attached cells. To prove the claim it suffices therefore to consider the case where a single $\ell$-cell is attached to $X$. 

Note that the volume of $X'$ equals the volume of $X$ since the attached cell is of lower dimension, hence is a set of measure zero with respect to any $n$-dimensional volume.

Let $g$ be a Riemannian metric on $X$, and let $h:S^{\ell-1}\to X$ be the simplicial attaching map. Choose $R>0$ such that $h:(S^{\ell-1},g_R)\to (X,g)$ is nonexpanding where $g_R$ denotes the round metric of radius $R$. Define a Riemannian metric on $X'=X\cup_h D^\ell$ in the following way: think of $X'$ as divided into four pieces
\[ X \;\;\; \cup_h \;\;\; (S^{\ell-1}\times [-1,0]) \;\;\;\cup\;\;\; (S^{\ell-1}\times [0,L]) \;\;\; \cup \;\;\; S^\ell_+ \]
and take the following Riemannian metrics on the respective pieces
\[ g, \;\;\; ((1+t)g_R - t h^*g)\oplus dt^2, \;\;\; g_R\oplus dt^2, \;\;\; g_R, \]
where $(S^\ell_+, g_R)$ is an $\ell$-dimensional round hemisphere of radius $R$ and $L>0$ is some (large) number. This gives a Riemannian metric $g_L$ on $X'$.

If $\stabsys_{\phi',k}(X',g_L)\geq \stabsys_{\phi,k}(X,g)$ for some $L>0$, we are done. Hence, we may assume that $\stabsys_{\phi',k}(X',g_L) < \stabsys_{\phi,k}(X,g)$ for every $L>0$.

Let $\alpha'\in H_k(X';\mathbb{R})$ represent via $\phi$ a nonzero class in $\mathbb{Z}^b$ such that its stable norm $\|\alpha'\|=\stabsys_{\phi',k}(X',g_L)$, and let $c\in C_k(X';\mathbb{R})$ be a real cycle representing $\alpha'$ such that $\Vol_k(c)\leq \|\alpha'\|+\varepsilon$. 

Next, we apply the coarea formula to the projection $p$ of $S^{\ell-1}\times [0,L]$ to the second factor. Denote $c_t:= c\cap p^{-1}(t)$. Then
\[ \int_0^L \Vol_{k-1}(c_t) dt \leq \Vol_k(c), \]
and therefore there is a $t_0$ such that 
\[ \Vol_{k-1}(c_{t_0}) \leq \Vol_k(c)/L \leq (\|\alpha'\|+\varepsilon)/L. \]

Since the right hand side is bounded by $(\stabsys_{\phi,k}(X,g)+\varepsilon)/L$, we can force the volume of $c_{t_0}$ to be arbitrarily small by choosing $L$ very large. By the isomperimetric inequality for small cycles (see \cite{Gromov(1983)}, Sublemma 3.4.B') applied to $S^{\ell-1} \times t_0$ there is a filling $d$ of $c_{t_0}$ of volume
\[ \Vol_k(d) \leq C_{R,\ell} \cdot \Vol_{k-1}(c_{t_0})^{k/(k-1)}, \]
with a constant $C_{R,\ell}>0$ depending only on the radius $R$ and the dimension $\ell$. Assuming $\Vol_{k-1}(c_{t_0})\leq 1$, we get a `linear isoperimetric inequality':
\[ \Vol_k(d) \leq C_{R,\ell} \cdot \Vol_{k-1}(c_{t_0}). \]

The cycle $c$ decomposes into two pieces along $c_{t_0}$, that is to say $c=c_+\cup_{c_{t_0}} c_-$. Define another cycle
\[ c' := c_+ \cup_{c_{t_0}} d = c - (c_-\cup_{c_{t_0}} (-d)). \]
Since the cycle $c_-\cup_{c_{t_0}} (-d)$ is contained in the attached $\ell$-cell, it is null-homologous. Thus, $c'$ also represents $\alpha'$. Moreover,
\begin{align*}
\Vol_k(c') & \leq \|\alpha'\|+\varepsilon + C_{R,\ell}(\|\alpha'\|+ \varepsilon)/L \\
& = (\|\alpha'\|+\varepsilon) (1+C_{R,\ell}/L).
\end{align*}

The map that contracts the cylinder $S^{\ell-1}\times [-1,L]$ to $X$ is nonexpanding. Hence, the image $c''$ of $c'$ under this retraction satisfies the same volume bound and still represents $\alpha'$. 

The homology class $\alpha\in H_k(X;\mathbb{R})$ represented by $c''$ in $X$ is a preimage of $\alpha'$. Therefore, it represents a nonzero element of the lattice $\mathbb{Z}^b\subset \mathbb{R}^b$.  Moreover,
\begin{align*}
\|\alpha\| &\leq \Vol_k(c'') \\
&\leq (\|\alpha'\|+\varepsilon)(1+C_{R,\ell}/L) \\
&= (\stabsys_{\phi',k}(X',g_L)+\varepsilon)(1+C_{R,\ell}/L).
\end{align*}

Since $\varepsilon>0$ was chosen arbitrarily, we see that
\[ \stabsys_{\phi,k}(X,g) \leq \stabsys_{\phi',k}(X',g_L)(1+C_{R,\ell}/L). \]
For $L$ tending to infinity, this implies $\sigma_{\phi,k}^{st}(X) \geq \sigma_{\phi',k}^{st}(X')$. Thus the claim is proved, and the extension axiom is valid for $I=\sigma_k^{st}$.
\end{proof}

\section{Two topological lemmata}\label{top_lemmata}

In the proofs of the three theorems stated in section \ref{intro} we will frequently use the following two lemmata. Both are direct consequences of \cite{Brunnbauer(2007b)}, Lemmata 2.2 and 2.3.

\begin{lem}\label{n-1-skeleton}
Let $M$ be a connected closed manifold of dimension $n\geq 3$, and let $f:M\to X$ be a map to a CW complex that is surjective on fundamental groups. Then $f$ is homotopic to a map from $M$ to the $(n-1)$-skeleton of $X$ if and only if one of the following statements holds:
\begin{enumerate}
\item $M$ is orientable, and $f_*[M]_\mathbb{Z}=0$ in $H_n(X;\mathbb{Z})$.
\item $M$ is nonorientable, $f$ maps every orientation reversing loop to a noncontractible one, and $f_*[M]_\mathcal{O}=0$ in $H_n(X;\mathcal{O})$.
\item $M$ is nonorientable, $f$ maps some orientation reversing loop to a contractible one, and $f_*[M]_{\mathbb{Z}_2}=0$ in $H_n(X;\mathbb{Z}_2)$.
\end{enumerate}
\end{lem}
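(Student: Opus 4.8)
The plan is to reduce the equivalence to the vanishing of a single obstruction class and to compute its coefficient group by twisted Poincar\'e duality on $M$. First I would fix a CW structure on $M$ with a single top cell (from a handle decomposition of the smooth manifold), writing $M=M^{(n-1)}\cup_\phi e^n$ with characteristic map $\Phi\colon(D^n,S^{n-1})\to(M,M^{(n-1)})$, and replace $f$ by a cellular map, so that $f(M^{(n-1)})\subseteq X^{(n-1)}$ and $f(M)\subseteq X^{(n)}$. The ``only if'' direction is then immediate and uniform in the three cases: if $f$ is homotopic to a map with image in $X^{(n-1)}$, then since $\dim X^{(n-1)}=n-1$ one has $H_n(X^{(n-1)};\mathcal M)=0$ for every coefficient system $\mathcal M$, so $f_*$ of the relevant fundamental class of $M$ factors through $0$. (In case~(ii) the hypothesis that every orientation reversing loop goes to a noncontractible loop is precisely what makes $w_1(M)\colon\pi_1(M)\to\mathbb Z_2$ descend along the epimorphism $f_*$ to the monodromy character of the local system $\mathcal O$ on $X$, which is needed for $f_*[M]_{\mathcal O}$ to be defined.)

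For the ``if'' direction I would apply obstruction theory to the problem of deforming $f$ inside $X$ into the subcomplex $X^{(n-1)}$. The pair $(X,X^{(n-1)})$ is $(n-1)$-connected and $\dim M=n$, so the partial deformation over $M^{(n-1)}$ already exists and there is exactly one obstruction
\[
\mathfrak o(f)\in H^n\bigl(M;\,f^*\pi_n(X,X^{(n-1)})\bigr),
\]
which vanishes if and only if $f$ is homotopic to a map into $X^{(n-1)}$, and which is represented on the single $n$-cell by $[f\circ\Phi]$. Since $n\ge3$, all skeleta $X^{(j)}$ with $j\ge2$ share the fundamental group $G:=\pi_1(X)$ and $f_*\colon\pi_1(M)\twoheadrightarrow G$; the relative Hurewicz theorem on the universal cover then identifies $\pi_n(X,X^{(n-1)})$, as a $\mathbb Z[G]$-module, with the cokernel $C_n(\widetilde X)/\operatorname{im}\partial_{n+1}$ of cellular chain modules.

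Next I would compute the coefficient group by the twisted Poincar\'e duality isomorphism $H^n(M;\mathcal L)\cong\bigl(\mathcal L\otimes\mathcal O_M\bigr)_{\pi_1(M)}$, where $\mathcal O_M$ is the orientation system of $M$ and the right-hand side denotes coinvariants. Using the surjectivity of $f_*$ and right exactness of coinvariants, one finds
\[
\bigl(f^*\pi_n(X,X^{(n-1)})\otimes\mathcal O_M\bigr)_{\pi_1(M)}\ \cong\ C_n(X;A)/B_n(X;A),
\]
with $A=\mathbb Z$ in case~(i), $A=\mathcal O$ in case~(ii), and $A=\mathbb Z_2$ in case~(iii); in case~(iii) the decisive point is that some orientation reversing loop of $M$ acts on the tensor factor $\mathcal O_M$ by $-1$ and trivially on $f^*\pi_n(X,X^{(n-1)})$, forcing the coinvariant group to be $2$-torsion and hence reducing the coefficients modulo $2$. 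Under these identifications $\mathfrak o(f)$, being the image of $[f\circ\Phi]$, is exactly the cellular chain $f_\#(e^n)=\pm f_\#\mu$, where $\mu$ is the $A$-fundamental cycle of $M$; since $\mu$, and therefore $f_\#\mu$, is a cycle, $\mathfrak o(f)=0$ in $C_n(X;A)/B_n(X;A)$ is equivalent to $[f_\#\mu]=0$ in $H_n(X;A)$, that is, to $f_*[M]_A=0$, which is the criterion in the statement.

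The step I expect to be the main obstacle is the coefficient computation of the previous paragraph: keeping track of the $\pi_1$-action together with the orientation twist, and correctly separating the three regimes according to whether $w_1(M)$ is trivial, is nontrivial but pulled back along $f_*$, or is nontrivial and not pulled back. A subsidiary subtlety is the final identification, where one must observe that although $\mathfrak o(f)$ lies a priori in a quotient $C_n/B_n$ of chains rather than in $H_n$, its vanishing still amounts to vanishing of the associated homology class because it is represented by the cycle $f_\#\mu$. I would also note that this argument is in substance the content of Lemmata~2.2 and~2.3 of \cite{Brunnbauer(2007b)}, so that the statement can alternatively be obtained by quoting those directly.
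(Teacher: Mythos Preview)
The paper does not prove this lemma; it merely records it as a direct consequence of Lemmata~2.2 and~2.3 of \cite{Brunnbauer(2007b)}. Your obstruction-theoretic argument is correct and, as you yourself note at the end, is essentially the content of that reference, so there is nothing substantive to compare.

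For what it is worth, the two steps you flag as delicate are indeed the heart of the matter, and your handling of them is sound. The coinvariant computation $(f^*\pi_n(X,X^{(n-1)})\otimes\mathcal O_M)_{\pi_1(M)}$ splits cleanly into the three cases according to whether $w_1(M)$ is trivial, factors through $f_*$, or fails to factor through $f_*$; in the last case the presence of $\gamma\in\ker f_*$ with $w_1(\gamma)=-1$ forces $2=0$ in the coinvariants exactly as you describe, and surjectivity of $f_*$ then reduces everything to $\pi_1(X)$-coinvariants, hence to cellular chains of $X$ with the appropriate coefficients. The passage from $C_n(X;A)/B_n(X;A)$ to $H_n(X;A)$ is legitimate precisely because the obstruction cochain on the single top cell is $f_\#$ of the fundamental cycle $\mu$, which is already a cycle, so its class in $C_n/B_n$ vanishes if and only if its homology class does.
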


The local coefficient systems $\mathcal{O}$ in part (ii) are the orientation bundle on $M$ and the induced integer bundle on $X$. Recall that there is a bijection between the nontrivial integer bundles over a CW complex and the subgroups of its fundamental group of index two. For example, the orientation bundle $\mathcal{O}$ of $M$ corresponds to the subgroup of orientation preserving loops in $M$. If the kernel of the epimorphism $f_*:\pi_1(M)\twoheadrightarrow \pi_1(X)$ consists only of orientation preserving loops, then the image of the subgroup of orientation preserving loops has again index two and determines a nontrivial integer bundle $\mathcal{O}$ over $X$. (See \cite{Brunnbauer(2007b)}, section 2.2.)

\begin{lem}\label{n,1-monotone}
Let $M$ and $N$ be two connected closed orientable manifolds of dimension $n\geq 3$. Let $i:M\hookrightarrow X$ be an embedding into a CW complex, and let $f:N\to X$ be a map such that the induced homomorphism on fundamental groups is surjective and such that $f_*[N]_\mathbb{Z} = i_*[M]_\mathbb{Z}$ in $H_n(X;\mathbb{Z})$. Identify $M$ and its image in $X$. Then $f$ is homotopic to an $(n,1)$-monotone map $N\to M\cup X^{(n-1)}$.
\end{lem}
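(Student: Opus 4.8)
The plan is to combine Lemma \ref{n-1-skeleton} with a standard simplicial-approximation and transversality argument, following the strategy of \cite{Brunnbauer(2007b)}. First I would observe that the hypotheses of Lemma \ref{n-1-skeleton} are met for the map $f:N\to X$: the homomorphism $f_*$ on fundamental groups is surjective by assumption, and since $f_*[N]_\mathbb{Z}=i_*[M]_\mathbb{Z}$, the image of $f_*[N]_\mathbb{Z}$ in $H_n(X;\mathbb{Z})$ agrees with $i_*[M]_\mathbb{Z}$. The key point is to work not in $X$ itself but in the pair $(X,M)$, or rather to pass to the quotient or to the space $X/M$: the relative fundamental class and the long exact sequence of the pair show that $f_*[N]_\mathbb{Z}$ maps to $0$ in $H_n(X,M;\mathbb{Z})$, because $i_*[M]_\mathbb{Z}$ comes from $H_n(M;\mathbb{Z})$. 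Hence, applying the orientable case (i) of Lemma \ref{n-1-skeleton} to the composite $N\xrightarrow{f} X\to X/M$ (after checking that this composite is still $\pi_1$-surjective, which follows since collapsing a subcomplex only kills part of $\pi_1$), one gets that $f$ is homotopic, rel nothing, to a map whose image lands in $M\cup X^{(n-1)}$ — i.e.\ into the subcomplex obtained from $M$ by adjoining the cells of $X$ of dimension at most $n-1$.

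Next I would make the resulting map $N\to M\cup X^{(n-1)}$ simplicial with respect to some triangulations (simplicial approximation theorem), keeping it in the same homotopy class. Now I must control how many $n$-simplices of $N$ map onto a given open $n$-simplex of the target. Since the target is $M\cup X^{(n-1)}$ and the only $n$-cells there are the $n$-simplices of $M$, and since $f_*[N]_\mathbb{Z}=[M]_\mathbb{Z}=i_*[M]_\mathbb{Z}$ is a degree-one condition over $M$, the algebraic count of preimages of a generic interior point of an $n$-simplex $\Delta$ of $M$ equals $\pm1$. The remaining task is to modify the simplicial map, within its homotopy class, so that the \emph{geometric} count becomes $1$, i.e.\ so that over each open $n$-simplex of $M$ there is exactly one open $n$-simplex of $N$ mapping onto it homeomorphically (and the rest collapse). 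This is the standard ``degree one map can be made monotone'' move: one cancels pairs of oppositely-oriented preimages by pushing them together across a disk and folding, a construction that goes back to Hopf; subdividing $N$ finely first makes the bookkeeping clean.

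The main obstacle I expect is precisely this last step — verifying that the cancellation of preimage $n$-simplices can be carried out while (a) staying in the prescribed homotopy class of maps into $M\cup X^{(n-1)}$ and (b) not creating preimages over the lower-dimensional cells $X^{(n-1)}\setminus M$ that would violate $(n,1)$-monotonicity (which only constrains the open $n$-simplices, so (b) is actually free). Point (a) is the delicate one: the fold/cancel homotopies must be supported away from $M$'s boundary behavior and must not change $f|_{\pi_1}$; here one uses that the target is simply connected in the relevant range after collapsing, or more carefully that the homotopies can be taken to be supported in small balls mapping into single simplices. Since essentially this statement is extracted from \cite{Brunnbauer(2007b)}, Lemmata 2.2 and 2.3, I would cite that construction for the technical details of the folding argument and merely indicate how the orientable hypothesis $f_*[N]_\mathbb{Z}=i_*[M]_\mathbb{Z}$ feeds into it. This gives the desired $(n,1)$-monotone representative $N\to M\cup X^{(n-1)}$.
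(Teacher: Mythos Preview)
The paper itself gives no proof of this lemma: it simply records that both Lemma~\ref{n-1-skeleton} and Lemma~\ref{n,1-monotone} are ``direct consequences of \cite{Brunnbauer(2007b)}, Lemmata 2.2 and 2.3.'' Since your proposal ultimately defers to exactly that citation for the technical core, you land in the same place as the paper, and in that sense the proposal is acceptable.

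That said, the sketch you give before invoking the citation has a real gap. Applying Lemma~\ref{n-1-skeleton} to the composite $N\xrightarrow{f} X\xrightarrow{q} X/M$ yields only that $q\circ f$ is homotopic \emph{in $X/M$} to a map with image in $(X/M)^{(n-1)}$. It does \emph{not} follow that $f$ is homotopic in $X$ to a map with image in $q^{-1}\bigl((X/M)^{(n-1)}\bigr)=M\cup X^{(n-1)}$: the quotient $q$ is not a fibration, and homotopies downstairs need not lift. What actually works is to rerun the obstruction/compression argument behind Lemma~\ref{n-1-skeleton} relative to the subcomplex $M\cup X^{(n-1)}\subset X$; the obstruction then lives in $H_n(X,M;\mathbb{Z})$, where $f_*[N]_\mathbb{Z}-i_*[M]_\mathbb{Z}$ indeed vanishes. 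So the $X/M$ heuristic is morally right but not literally an application of Lemma~\ref{n-1-skeleton}.

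A second imprecision: after compressing to $f':N\to M\cup X^{(n-1)}$ you assert that the algebraic degree over each open $n$-simplex of $M$ is $\pm1$. A priori you only know that $f'_*[N]_\mathbb{Z}$ and $[M]_\mathbb{Z}$ agree in $H_n(X;\mathbb{Z})$, hence that the cellular chain $\sum_i(d_i-1)\sigma_i$ bounds in $X$, not that each $d_i=1$. One must either first use the $(n{+}1)$-cells of $X$ to correct the local degrees (this is part of the compression step), or observe that once the image lies in the $n$-dimensional complex $M\cup X^{(n-1)}$ and one has arranged $f'_*[N]_\mathbb{Z}=[M]_\mathbb{Z}$ already in $H_n(M\cup X^{(n-1)};\mathbb{Z})$, the absence of $(n{+}1)$-cells forces $d_i=1$ for every $i$. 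Only then does the Hopf-type cancellation of oppositely oriented preimage simplices produce an $(n,1)$-monotone representative. These are exactly the points handled in \cite{Brunnbauer(2007b)}, so your final deferral is the right move; just be aware that the intermediate sketch does not stand on its own.
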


\section{Spheres and their loop spaces}\label{spheres_loop_sp}

In this section, we will briefly recall some topological properties of spheres and their loop spaces that will be used in the proofs of Theorem \ref{stabsys_thm} and Theorem \ref{freedom}.

If $k$ is odd, let $L_k$ denote the $k$-dimensional sphere $S^k$. The cohomology ring is the exterior algebra
\[ H^*(L_k;\mathbb{Z}) \cong \Lambda_\mathbb{Z}[\alpha] \]
with $\alpha\in H^k(L_k;\mathbb{Z})$ a generator. Therefore, by the K\"unneth formula
\[ H^*(L_k^b;\mathbb{Z}) \cong \Lambda_\mathbb{Z} [\alpha_1,\ldots,\alpha_b] \]
with $\alpha_i$ of degree $k$. Furthermore, we will always use the CW decomposition of $S^k$ consisting of one $0$-cell and one $k$-cell.

It is known by work of Serre (\cite{Serre(1951)}) that the homotopy groups $\pi_m(S^k)$ for $m>k$ are finite (recall that $k$ is odd). Sullivan showed in \cite{Sullivan(1974)} that the selfmap $S^k\to S^k$ of degree $d$ induces a homomorphism $\pi_m(S^k)\to \pi_m(S^k)$, which is nilpotent on $d$-torsion. Therefore, there exists a nonzero degree selfmap of $S^k$ that induces the trivial homomorphism on $\pi_m(S^k)$.

Finally note that the map $S^k\to S^k$ of degree two multiplies each homology class in $L_k^b$ of dimension $\ell>0$ by some power of two. In particular, the homomorphism on homology with $\mathbb{Z}_2$ coefficients is trivial.

If $k$ is even, let $L_k$ be a CW complex that is homotopy equivalent to the based loop space $\Omega S^{k+1}$ of the $(k+1)$-dimensional sphere. More precisely, let $L_k$ be the James reduced product $J(S^k)$. (See for example \cite{Hatcher(2002)}, pages 224--225 and section 4.J.) The CW structure on $L_k$ consists of one cell in each dimension divisible by $k$, and the cohomology ring is the divided polynomial algebra
\[ H^*(L_k;\mathbb{Z}) \cong \Gamma_\mathbb{Z}[\alpha] \]
with $\alpha\in H^k(L_k;\mathbb{Z})$. (Recall that this is almost a polynomial algebra. In fact, the generator in degree $kp$ equals $\alpha^p/p!$. Using real coefficients, the cohomology ring is a polynomial algebra.) By the K\"unneth formula
\[ H^*(L_k^b;\mathbb{Z}) \cong \Gamma_\mathbb{Z}[\alpha_1,\ldots,\alpha_b], \]
where the $\alpha_i$ are of degree $k$.

Since $\pi_m(\Omega S^{k+1})=\pi_{m+1}(S^{k+1})$, the homotopy groups $\pi_m(L_k)$ are finite for $m>k$ (recall that $k$ is even now). The group structure in $\pi_m(L_k)$ coincides with the one coming from loop multiplication, hence one can easily construct selfmaps $L_k\to L_k$ that induce multiplication by some positive integer on $H^k(L_k;\mathbb{Z})$ and trivial homomorphisms on $\pi_m(L_k)$.

The map $\Omega S^{k+1}\to \Omega S^{k+1}$ that assigns to each loop its double induces multiplication by two in $\pi_k(L_k^b)$ and therefore also in $H_k(L_k^b;\mathbb{Z})$ and in $H^k(L_k^b;\mathbb{Z})$. Hence it induces multiplication by some power of two in every $H^\ell(L_k^b;\mathbb{Z})$ with $\ell>0$ and thus in every $H_\ell(L_k^b;\mathbb{Z})$, as well. Consequently, the induced homomorphism on homology with $\mathbb{Z}_2$ coefficients is trivial. 

\begin{summ*} 
The CW complexes $L_k^b$ have cells only in dimensions divisible by $k$, their real cohomology rings are generated by elements of degree $k$, and there exist selfmaps $h_m:L_k^b\to L_k^b$ for all $m>k$ that induce multiplication by some positive integer on $H^k(L_k^b;\mathbb{Z})$ (and therefore also on $H_k(L_k^b;\mathbb{Z})$) and vanish on $\pi_m(L_k^b)$. Moreover, there exists a selfmap $L_k^b\to L_k^b$ that induces the zero homomorphism in homology of nonzero dimension with coefficients in $\mathbb{Z}_2$ but that is bijective on $H_k(L_k^b;\mathbb{R})$.
\end{summ*}

\section{Existence of stable systolic inequalities}\label{existence}

Using the properties of $L_k$ listed in the preceding section, we are able to prove:

\begin{lem}\label{map_loopspace_sphere}
Let $X$ be a connected CW complex of dimension $n$. Let $1\leq k\leq n-1$, and assume that the $k$-th Betti number $b:=b_k(X)$ is finite. Then there is a map $f:X\to L_k^b$ that induces an isomorphism 
\[ f_*: H_k(X;\mathbb{R}) \xrightarrow{\cong} H_k(L_k^b;\mathbb{R}). \]
In fact, $f$ can be chosen such that the induced map on the integral lattices corresponds to multiplication by some positive integer.
\end{lem}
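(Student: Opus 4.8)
The plan is to factor the construction through the Eilenberg--Mac\,Lane space $K:=K(\mathbb{Z}^b,k)$ and then \emph{compress} the resulting map into $L_k^b$ with the help of the self-maps recorded in the Summary of Section~\ref{spheres_loop_sp}. (If $b=0$ the assertion is trivial, so assume $b\ge 1$.) First I would choose classes $\beta_1,\dots,\beta_b\in H^k(X;\mathbb{Z})$ representing a basis of $H^k(X;\mathbb{Z})_\mathbb{R}$ and let $g\colon X\to K$ be a corresponding map, so that $g^*\delta_i=\beta_i$ for the canonical generators $\delta_i\in H^k(K;\mathbb{Z})$. Let $\iota\colon L_k^b\to K$ be the map classifying the generators $\alpha_i\in H^k(L_k^b;\mathbb{Z})$, so $\iota^*\delta_i=\alpha_i$; for $k=1$ one has $L_1^b=T^b=K$ and $\iota$ is a homotopy equivalence, whence $f:=g$ already works, so assume $k\ge 2$. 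For a positive integer $N$ write $\rho_N\colon K\to K$ for the self-map with $\rho_N^*\delta_i=N\delta_i$. It suffices to produce a map $f\colon X\to L_k^b$ together with a homotopy $\iota\circ f\simeq\rho_N\circ g$ for some positive integer $N$: then
\[
f^*\alpha_i = f^*\iota^*\delta_i = (\iota f)^*\delta_i = (\rho_N g)^*\delta_i = g^*(\rho_N^*\delta_i) = N\beta_i ,
\]
so $f^*$ is multiplication by $N$ on $k$-dimensional cohomology modulo torsion (the $\alpha_i$ being a $\mathbb{Z}$-basis of $H^k(L_k^b;\mathbb{Z})$, the $\beta_i$ a $\mathbb{Z}$-basis of $H^k(X;\mathbb{Z})_\mathbb{R}$); tensoring with $\mathbb{R}$ and dualizing via the universal-coefficient pairing shows that $f_*\colon H_k(X;\mathbb{R})\to H_k(L_k^b;\mathbb{R})$ is an isomorphism which, in the dual bases of the integral lattices, is multiplication by $N$.

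To build such an $f$ I would work over the skeleta $X^{(k)}\subset X^{(k+1)}\subset\dots\subset X^{(n)}=X$. By the Summary, $(L_k^b)^{(k)}=\bigvee_b S^k$, which is also the $k$-skeleton of $K$, and one may arrange $\iota$ to be cellular and the identity on these $k$-skeleta. After making $g$ cellular, $g|_{X^{(k)}}$ lands in $K^{(k)}=(L_k^b)^{(k)}$ and hence defines $f_k\colon X^{(k)}\to L_k^b$ with $\iota f_k\simeq g|_{X^{(k)}}$. Since $L_k^b$ and $K$ are $(k-1)$-connected and $\iota$ induces an isomorphism $\pi_k(L_k^b)\xrightarrow{\cong}\pi_k(K)\cong\mathbb{Z}^b$, the restriction of $f_k$ to the attaching map of any $(k+1)$-cell of $X$, being null-homotopic in $K$, is null-homotopic in $L_k^b$; thus $f_k$ extends to $f_{k+1}\colon X^{(k+1)}\to L_k^b$. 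As $\pi_{k+1}(K)=0$, any two extensions of $g|_{X^{(k)}}$ over $X^{(k+1)}$ are homotopic rel $X^{(k)}$, so $\iota f_{k+1}\simeq g|_{X^{(k+1)}}$.

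For the inductive step I would set $h:=h_{k+1}\circ h_{k+2}\circ\cdots\circ h_n\colon L_k^b\to L_k^b$, using the self-maps from the Summary. Then $h$ is multiplication by some positive integer $D$ on $H^k(L_k^b;\mathbb{Z})$ — hence $\iota h\simeq\rho_D\iota$, maps into $K$ being determined by their effect on $H^k$ — while $h_*$ vanishes on $\pi_j(L_k^b)$ for every $j$ with $k<j\le n$ (each composite $(h_{k+1})_*\circ\cdots\circ(h_n)_*$ on $\pi_j$ contains the zero factor $(h_j)_*$). Assume inductively that $f_i\colon X^{(i)}\to L_k^b$ (where $k+1\le i\le n-1$) satisfies $\iota f_i\simeq\rho_{N_i}g|_{X^{(i)}}$ for some positive integer $N_i$. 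The obstruction to extending $f_i$ over $X^{(i+1)}$ is a class in $H^{i+1}(X;\pi_i(L_k^b))$, and replacing $f_i$ by $h\circ f_i$ transforms the obstruction cocycle cell-by-cell by $h_*\colon\pi_i(L_k^b)\to\pi_i(L_k^b)$, which is zero; so $h f_i$ extends over $X^{(i+1)}$ (the customary modification of $h f_i$ on $X^{(i)}$ rel $X^{(i-1)}$ does not change $\iota(h f_i)$ up to homotopy, since $\pi_i(K)=0$). Because $\pi_i(K)=\pi_{i+1}(K)=0$ for $i>k$, extensions over $X^{(i+1)}$ into $K$ are unique up to homotopy rel $X^{(i)}$, and both $\iota f_{i+1}$ and $\rho_{DN_i}g|_{X^{(i+1)}}$ restrict rel $X^{(i)}$ to $\rho_D\iota f_i\simeq\rho_{DN_i}g|_{X^{(i)}}$; hence $\iota f_{i+1}\simeq\rho_{DN_i}g|_{X^{(i+1)}}$, and one sets $N_{i+1}:=DN_i$. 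After $n-k-1$ steps this yields $f:=f_n\colon X\to L_k^b$ with $\iota f\simeq\rho_N g$, where $N=D^{\,n-k-1}$, which by the first paragraph completes the proof.

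I expect the main obstacle to be the simultaneous bookkeeping in the last two paragraphs: one must push a partial map one skeleton further into $L_k^b$ while keeping control of its composite with $\iota$. The first is possible only after post-composing with a self-map that annihilates the relevant higher homotopy group, and the decisive point — guaranteed by the Summary — is that $L_k^b$ possesses a \emph{single} self-map $h$ that at once multiplies $H^k$ by a positive integer and kills all of $\pi_{k+1}(L_k^b),\dots,\pi_n(L_k^b)$; the second control then comes essentially for free, since $K(\mathbb{Z}^b,k)$ has no homotopy above degree $k$, which pins the composite with $\iota$ down to the standard map $\rho_N g$ up to homotopy. The only further technical points are the verification that $\iota$ may be taken cellular and equal to the identity on $k$-skeleta, and that $\iota_*$ is an isomorphism on $\pi_k$.
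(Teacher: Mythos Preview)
Your proof is correct and follows essentially the same route as the paper: factor through $K(\mathbb{Z}^b,k)$, use cellular approximation to land in the common low skeleton $\bigvee^b S^k$ of $L_k^b$ and $K(\mathbb{Z}^b,k)$, then extend skeleton by skeleton after post-composing with self-maps of $L_k^b$ that annihilate the relevant higher homotopy groups. The only cosmetic differences are that the paper applies a fresh $h_m$ at stage $m$ whereas you use a single composite $h=h_{k+1}\circ\cdots\circ h_n$ at every step, and that you keep explicit track of the homotopy $\iota f\simeq\rho_N g$ to pin down the ``multiplication by a positive integer'' conclusion, which the paper records more tersely.
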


This lemma stems from \cite{KS(1999)}, where the map $f$ is constructed in sections 4 and 10.

\begin{proof}
The case $k=1$ is easy because $L_1^b$ is just the $b$-dimensional torus $T^b$, which is $K(\mathbb{Z}^b,1)$. The canonical epimorphism
\[ \pi_1(X) \twoheadrightarrow H_1(X;\mathbb{Z}) \twoheadrightarrow H_1(X;\mathbb{Z})_\mathbb{R} \]
is induced by the Jacobi map $f:X\to T^b$. The induced homomorphism $f_*:H_1(X;\mathbb{R})\to H_1(T^b;\mathbb{R})$ is consequently an isomorphism, which is moreover an isomorphism of the integral lattices.

Now, let $2\leq k \leq n-1$. Choose a CW decomposition of $K(\mathbb{Z}^b,k)$ such that the $(k+1)$-skeleton is $\bigvee^b S^k$, the wedge of $b$ spheres of dimension $k$. As in the introduction there is a map $X\to K(\mathbb{Z}^b,k)$ that induces an isomorphism on the integral lattices of homology in dimension $k$. By cellular approximation this gives a map $X^{(k+1)}\to \bigvee^b S^k$. Note that the $(k+1)$-skeleton of $L_k^b$ is also $\bigvee^b S^k$. Thus, we have a map 
\[ f^{(k+1)}:X^{(k+1)}\to L_k^b \] 
that induces an isomorphism on the integral lattices of homology in degree $k$.

Let $h_{k+1}:L_k^b\to L_k^b$ be as in the summary above, i.\,e.\ it induces the trivial homomorphism on the $(k+1)$-dimensional homotopy groups and multiplication by some positive integer on real homology of degree $k$. Then the  composition $h_{k+1}\circ f^{(k+1)}:X^{(k+1)}\to L_k^b$ extends over $X^{(k+2)}$ since it is zero on the $(k+1)$-dimensional homotopy groups. Call this extension $f^{(k+2)}:X^{(k+2)}\to L_k^b$. Repeating this process gives finally a map
\[ f:X\to L_k^b \]
for which the induced monomorphism $H_k(X;\mathbb{Z})_\mathbb{R}\hookrightarrow H_k(L_k^b;\mathbb{Z})_\mathbb{R}$ corresponds to multiplication by some positive integer.
\end{proof}

\begin{proof}[Proof of Theorem \ref{stabsys_thm}]
Denote $b:=b_k(M)$. If $k$ does not divide $n$, then the $n$-skeleton and the $(n-1)$-skeleton of $L_k^b$ coincide, and $\sigma_k^{st}(M)=0$ by the comparison axiom applied to the map $f:M\to (L_k^b)^{(n-1)}$ of Lemma \ref{map_loopspace_sphere} (which is $(n,0)$-monotone due to the dimension of the range). Assume now that $n=kp$.

If $n=2$, then $k=1$ and it is known that, apart from the sphere, all closed orientable surfaces satisfy a stable systolic inequality \eqref{stabsys_ineq} and that their cohomology rings are generated in degree one. Therefore, Theorem \ref{stabsys_thm} is true in this case and we may restrict our attention to $n\geq 3$.

Let $f:M\to L_k^b$ induce an isomorphism on real homology of degree $k$. If $f_*[M]_\mathbb{Z}=0$ in $H_n(L_k^b;\mathbb{R})$, then $f_*[M]_\mathbb{Z}=0$ also in $H_n(L_k^b;\mathbb{Z})$ since the homology of $L_k^b$ is torsion-free. By Lemma \ref{n-1-skeleton} one can homotope $f$ so that its image lies in the $(n-1)$-skeleton of $L_k^b$ (i.\,e.\ $f$ is $(n,0)$-monotone), and by the comparison axiom $\sigma_k^{st}(M)$ vanishes. The theorem of Gromov stated in the introduction (Theorem \ref{thm_gromov}) shows that there cannot be cohomology classes $\beta_1,\ldots,\beta_p\in H^k(M;\mathbb{R})$ having nonvanishing product.

On the other hand, if $f_*[M]_\mathbb{Z}\neq 0$, then there are cohomology classes $\beta'_1,\ldots,\beta'_p\in H^k(L_k^b;\mathbb{R})$ such that 
\[ \langle \beta'_1 \smile\ldots\smile \beta'_p, f_*[M]_\mathbb{Z} \rangle\neq 0 \]
since the cohomology of $L_k^b$ is generated by classes of degree $k$. Therefore, the cohomology classes $\beta_i := f^*\beta'_i \in H^k(M;\mathbb{R})$ have nonvanishing product and the stable $k$-systolic constant of $M$ is nonzero by Theorem \ref{thm_gromov}. This finishes the proof. 
\end{proof}

\begin{proof}[Proof of Theorem \ref{freedom}]
Let $f_k: M\to L_k^{b_k(M)}$ be as in Lemma \ref{map_loopspace_sphere}. Define
\[ F:=(f_2,\ldots,f_{n-1}) : M\longrightarrow L:= \prod_{k=2}^{n-1} L_k^{b_k(M)} .\]
Then by the K\"unneth formula, $F$ induces monomorphisms on $k$-dimensional real homology for all $k=2,\ldots,n-1$. 

There is a selfmap $L\to L$ that maps all homology classes (of nonzero dimension) to an even multiple. Composing $F$ with this map, one gets a map (still called $F$) that is injective on real homology of dimensions $k=2,\ldots,n-1$ and that is zero on $H_n(M;\mathbb{Z}_2)$, i.\,e.\ $F_*[M]_{\mathbb{Z}_2}=0$ in $H_n(L;\mathbb{Z}_2)$. By Lemma \ref{n-1-skeleton} it is possible to deform $F$ so that its range lies in the $(n-1)$-skeleton of $L$. (Note that $L$ is simply-connected and that we may assume $n\geq 3$ since the theorem is empty for $n=2$.)

Choose (continuous piecewise smooth) Riemannian metrics $g_1$ and $g_2$ on $M$ and the $(n-1)$-skeleton $L^{(n-1)}$ of $L$, respectively. Then
\[ g_1^t := F^*g_2 + t^2 g_1 \]
with $t>0$ is again a Riemannian metric on $M$. Choosing $t>0$ small enough, it can be arranged that 
\[ \Vol(M,g_1^t) \leq \varepsilon \]
for any given $\varepsilon>0$. Moreover,
\[ F:(M,g_1^t) \to (L^{(n-1)},g_2) \]
is $1$-Lipschitz. Since $F_*:H_k(M;\mathbb{Z})_\mathbb{R}\hookrightarrow H_k(L^{(n-1)};\mathbb{Z})_\mathbb{R}$ is injective for all $k=2,\ldots,n-1$, it follows that
\[ \stabsys_k(M,g_1^t) \geq \stabsys_k(L^{(n-1)},g_2). \]
Therefore, the left-hand side of
\[ \prod_{k=2}^{n-1} \stabsys_k(M,g_1^t)^{p_k} \leq C \cdot \Vol(M,g_1^t) \]
is bounded from below by the constant $\prod_k \stabsys_k(L^{(n-1)},g_2)^{p_k}$, whereas the right-hand side can be made arbitrarily small. Thus, there is no constant $C>0$ such that this inequality is satisfied for all metrics on $M$.
\end{proof}

The \emph{$k$-systole modulo torsion} $\sys_k^\infty(M,g)$ of a connected closed Riemannian manifold is defined as the infimum of the volume $\Vol_k$ on the nonzero classes in $H_k(M;\mathbb{Z})_\mathbb{R}$. The corresponding \emph{$k$-systolic constant modulo torsion} is denoted by $\sigma_k^\infty(M)$. 

By work of Katz and Suciu (\cite{KS(1999)} and \cite{KS(2001)}) it is known that $\sigma_k^\infty(M)$ vanishes for all $2\leq k\leq n-1$. (This phenomenon is called \emph{systolic freedom modulo torsion}.) For $k=1$, Gromov and Babenko showed (see \cite{Gromov(1983)}, Appendix 2, (B'$_1$) and \cite{Babenko(1992)}, Lemma 8.4) that $\sigma_1^\infty(M)$ is zero if and only if the Jacobi map $\Phi:M\to T^b$ is homotopic to a map with range in the $(n-1)$-skeleton of the torus.

Since $\sigma_1^{st}(M)\geq \sigma_1^\infty(M)$ by definition, and since Babenko's lemma also applies to the stable systolic constant, the same characterization is true for the vanishing of $\sigma_1^{st}(M)$. Using Lemma \ref{n-1-skeleton}, we get the following corollary.

\begin{cor}\label{stable-1-sys}
The stable $1$-systolic constant $\sigma_1^{st}(M)$ and the $1$-systolic constant modulo torsion $\sigma_1^\infty(M)$ vanish if and only if the fundamental class of $M$ with coefficients in $\mathbb{Z}$, $\mathcal{O}$, or $\mathbb{Z}_2$ (according to the orientation behaviour of the Jacobi map) is mapped to zero by the Jacobi map.
\end{cor}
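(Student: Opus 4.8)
The plan is to obtain Corollary~\ref{stable-1-sys} by feeding the characterization recalled just above into Lemma~\ref{n-1-skeleton}. Since $\sigma_1^{st}(M)\ge\sigma_1^\infty(M)\ge 0$, and since both constants vanish exactly when the Jacobi map $\Phi\colon M\to T^b$ (with $b:=b_1(M)$) is homotopic to a map whose image lies in the $(n-1)$-skeleton of $T^b$, it suffices to show that this skeleton condition is equivalent to $\Phi_*$ annihilating the appropriate fundamental class of $M$.

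First I would record that $\Phi$ is surjective on fundamental groups, being induced by the composite epimorphism $\pi_1(M)\twoheadrightarrow H_1(M;\mathbb{Z})\twoheadrightarrow H_1(M;\mathbb{Z})_\mathbb{R}=\pi_1(T^b)$. Hence, for $n\ge 3$, Lemma~\ref{n-1-skeleton} applies with $f=\Phi$ and $X=T^b$, and the skeleton condition unwinds into: $\Phi_*[M]_\mathbb{Z}=0$ if $M$ is orientable; $\Phi_*[M]_\mathcal{O}=0$ if $M$ is nonorientable and $\Phi$ sends every orientation-reversing loop to a noncontractible loop; and $\Phi_*[M]_{\mathbb{Z}_2}=0$ if $M$ is nonorientable and $\Phi$ sends some orientation-reversing loop to a contractible one. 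It remains to observe that this trichotomy is precisely the ``orientation behaviour of the Jacobi map'' of the statement: because $\pi_1(T^b)\cong\mathbb{Z}^b$ is torsion-free, $\Phi$ sends a loop $\gamma$ to a contractible loop if and only if $[\gamma]$ is a torsion class in $H_1(M;\mathbb{Z})$; and when no torsion class is orientation reversing, the orientation character descends to a nontrivial homomorphism $\mathbb{Z}^b\to\mathbb{Z}_2$, which is exactly the one defining the induced integer bundle $\mathcal{O}$ on $T^b$ that appears in alternative (ii) of the lemma.

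This settles $n\ge 3$. For $n=2$ (and $k=1$ forces $n\ge 2$) I would argue directly: if $b_1(M)=0$ then $H_1(M;\mathbb{Z})_\mathbb{R}=0$, both constants vanish by convention, and $\Phi$ maps $M$ to a point, so it kills every fundamental class; if $b_1(M)\ge 1$ then $M$ is a closed surface, the $(n-1)$-skeleton of $T^b$ is a wedge of circles, and a direct (classical) obstruction-theoretic argument for maps of a surface into $\bigvee^b S^1$ again shows that $\Phi$ deforms into this skeleton precisely when $\Phi_*$ of the relevant fundamental class vanishes.

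I expect the only step needing genuine care to be the identification in the second paragraph: verifying, in the nonorientable case, that the local coefficient system forced by the orientation behaviour of $\Phi$ coincides with the one occurring in the corresponding alternative of Lemma~\ref{n-1-skeleton}. Once this bookkeeping is settled, the corollary is a formal consequence of results already in hand.
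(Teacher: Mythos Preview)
Your proposal is correct and follows exactly the route the paper takes: combine the Gromov--Babenko characterization (both constants vanish iff the Jacobi map compresses into the $(n-1)$-skeleton of $T^b$) with Lemma~\ref{n-1-skeleton}, and handle surfaces by a direct check. The paper states this in one line (``Using Lemma~\ref{n-1-skeleton}, we get the following corollary'' and ``For surfaces this can easily be seen directly''); your version merely spells out the $\pi_1$-surjectivity hypothesis and the orientation-behaviour bookkeeping that the paper leaves implicit.
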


For surfaces this can easily be seen directly. Note in particular that the only surfaces with zero stable $1$-systolic constant are the sphere, the real projective plane, and the Klein bottle.

\section{Homological invariance of stable systolic constants}\label{homol_invar}

The first aim of this section is to prove Theorem \ref{homol_invar_thm}. Then, we will apply the theorem to the case of projective spaces over division algebras. Afterwards an analogous theorem for the nonorientable case will be stated, where $k=1$ may be assumed according to Theorem \ref{freedom}. 

\begin{proof}[Proof of Theorem \ref{homol_invar_thm}]
The images of the fundamental classes of $M$ and $N$ in the integral homology group $H_n(K(\mathbb{Z}^b,k);\mathbb{Z})$ may differ by a torsion class. Thus, Lemma \ref{n,1-monotone} cannot be applied directly. But we can bypass this problem in the following way.

Let $f:K(\mathbb{Z}^b,k)^{(n+1)} \to L_k^b$ be a map as in Lemma \ref{map_loopspace_sphere}. (Since $K(\mathbb{Z}^b,k)$ is not finite-dimensional in general, we have to consider some skeleton to be able to apply Lemma \ref{map_loopspace_sphere}.) In particular, the induced map on the integral lattices of $k$-dimensional homology corresponds to multiplication by some positive integer $r$. Then $f_*\Phi_*[M]_\mathbb{Z} = f_*\Psi_*[N]_\mathbb{Z}$ in $H_n(L_k^b;\mathbb{Z})$ since the homology of $L_k^b$ is torsion-free.

Using the mapping cylinder of $f\circ\Phi$, we may assume without loss of generality that $f\circ\Phi$ is the inclusion of a subcomplex into $L_k^b$. If we start with a CW decomposition of $M$ with only one $0$-cell, then there is a $1$-cell in the mapping cylinder connecting this $0$-cell with the $0$-cell of $L_k^b$. Collapsing this $1$-cell, we may assume that $L_k^b$ is obtained from $M$ by attaching only cells of positive dimension.

By Lemma \ref{n,1-monotone}, $f\circ \Psi$ is homotopic to an $(n,1)$-monotone map
\[ \Psi':N\longrightarrow X:= M\cup (L_k^b)^{(n-1)}. \] 
Note that $X$ is a finite complex. In particular, it is an extension of $M$. By the comparison and extension axiom, we get
\[ \sigma_k^{st}(N) \leq 1/r^{n/k}\cdot\sigma_k^{st}(X) = \sigma_k^{st}(M). \]
Changing the roles of $M$ and $N$ gives equality.
\end{proof}

\begin{rem*}
In the cases $k=1$ and $k=2$, it is not necessary to use a map $f:K(\mathbb{Z}^b,k)\to L_k^b$ because the homology of $K(\mathbb{Z},1)=S^1$ and $K(\mathbb{Z},2)=\mathbb{C}\mathrm{P}^\infty$ is torsion-free.
\end{rem*}

A direct consequence of Theorem \ref{homol_invar_thm} is the following corollary.

\begin{cor}
Let $f:M\to N$ be a degree one map between connected closed orientable manifolds such that the induced homomorphism 
\[ f_* :H_k(M;\mathbb{Z})_\mathbb{R} \xrightarrow{\cong} H_k(N;\mathbb{Z})_\mathbb{R} \]
is bijective. Then $\sigma_k^{st}(M) = \sigma_k^{st}(N)$.
\end{cor}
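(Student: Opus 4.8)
The plan is to deduce this corollary directly from Theorem \ref{homol_invar_thm} by producing the required maps into an Eilenberg-Mac\,Lane space. Given a degree one map $f:M\to N$ inducing an isomorphism $f_*:H_k(M;\mathbb{Z})_\mathbb{R}\xrightarrow{\cong} H_k(N;\mathbb{Z})_\mathbb{R}$, first note that $b_k(M)=b_k(N)=:b$ since $f_*$ is an isomorphism of free abelian groups of the same rank. Next I would choose, as in the introduction, a map $\Psi:N\to K(\mathbb{Z}^b,k)$ that induces multiplication on (equivalently, an isomorphism of) the integral lattice $H_k(N;\mathbb{Z})_\mathbb{R}\xrightarrow{\cong} H_k(K(\mathbb{Z}^b,k);\mathbb{Z})$; concretely, pick classes $\gamma_1,\dots,\gamma_b\in H^k(N;\mathbb{Z})$ representing a basis modulo torsion and let $\Psi$ be the classifying map for $(\gamma_1,\dots,\gamma_b)$. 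Then set $\Phi:=\Psi\circ f:M\to K(\mathbb{Z}^b,k)$.

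The two points to verify are that $\Phi_*$ is bijective on the integral lattices of $k$-dimensional homology, and that $\Phi_*[M]_\mathbb{Z}=\Psi_*[N]_\mathbb{Z}$ in $H_n(K(\mathbb{Z}^b,k);\mathbb{R})$. The first is immediate: $\Phi_*=\Psi_*\circ f_*$ on $H_k(\,\cdot\,;\mathbb{Z})_\mathbb{R}$, and both $\Psi_*$ and $f_*$ are bijective on these lattices (for $\Psi_*$ this is by construction, for $f_*$ by hypothesis), so the composite is bijective. The second follows from the fact that $f$ has degree one: $f_*[M]_\mathbb{Z}=[N]_\mathbb{Z}$ in $H_n(N;\mathbb{Z})$, hence $\Phi_*[M]_\mathbb{Z}=\Psi_*f_*[M]_\mathbb{Z}=\Psi_*[N]_\mathbb{Z}$ already in $H_n(K(\mathbb{Z}^b,k);\mathbb{Z})$, and a fortiori in the real homology group. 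With these two facts in hand, Theorem \ref{homol_invar_thm} applies verbatim and yields $\sigma_k^{st}(M)=\sigma_k^{st}(N)$.

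I do not expect any genuine obstacle here; this is a routine unwinding of the hypotheses of Theorem \ref{homol_invar_thm}. The only mild subtlety worth flagging is the low-dimensional edge cases $n\leq 2$ or $k=n$, but the statement is phrased for $1\leq k\leq n-1$ (inherited from Theorem \ref{homol_invar_thm}) and the degree-one hypothesis already forces $M$ and $N$ to be of the same dimension $n$, so nothing special needs to be said. One could write the whole argument in three or four lines.

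\begin{proof}
Since $f$ has degree one, $f_*[M]_\mathbb{Z}=[N]_\mathbb{Z}$ in $H_n(N;\mathbb{Z})$. By hypothesis $f_*:H_k(M;\mathbb{Z})_\mathbb{R}\xrightarrow{\cong}H_k(N;\mathbb{Z})_\mathbb{R}$ is an isomorphism, so $b_k(M)=b_k(N)=:b$. As explained in the introduction, choose a map $\Psi:N\to K(\mathbb{Z}^b,k)$ inducing an isomorphism $\Psi_*:H_k(N;\mathbb{Z})_\mathbb{R}\xrightarrow{\cong}H_k(K(\mathbb{Z}^b,k);\mathbb{Z})$ on the integral lattices, and put $\Phi:=\Psi\circ f:M\to K(\mathbb{Z}^b,k)$. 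Then $\Phi_*=\Psi_*\circ f_*$ is bijective on the integral lattices of $k$-dimensional homology, and
\[ \Phi_*[M]_\mathbb{Z}=\Psi_*f_*[M]_\mathbb{Z}=\Psi_*[N]_\mathbb{Z} \in H_n(K(\mathbb{Z}^b,k);\mathbb{Z}), \]
hence also in $H_n(K(\mathbb{Z}^b,k);\mathbb{R})$. Therefore Theorem \ref{homol_invar_thm} applies and gives $\sigma_k^{st}(M)=\sigma_k^{st}(N)$.
\end{proof}
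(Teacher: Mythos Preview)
Your proof is correct and is precisely the intended argument: the paper states the corollary as ``a direct consequence of Theorem~\ref{homol_invar_thm}'' without spelling out a proof, and your construction $\Phi:=\Psi\circ f$ together with the degree-one identity $f_*[M]_\mathbb{Z}=[N]_\mathbb{Z}$ is exactly how one unwinds that direct consequence.
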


For example, the inclusion $\mathbb{C}^{2n+1}\times 0 \hookrightarrow \mathbb{C}^{2n+2}=\mathbb{H}^{n+1}$ gives a degree one map $\mathbb{C}\mathrm{P}^{2n}\to \mathbb{H}\mathrm{P}^n$ from complex to quaternionic projective space that induces isomorphisms of homology in all dimensions divisible by $4$. Therefore, the corollary implies
\[ \sigma_{4k}^{st}(\mathbb{H}\mathrm{P}^n) = \sigma_{4k}^{st}(\mathbb{C}\mathrm{P}^{2n}), \]
which is nonzero if and only if $k$ divides $n$ by Theorem \ref{stabsys_thm}. In particular, 
\[ \sigma_{4n}^{st}(\mathbb{H}\mathrm{P}^{2n}) = \sigma_{4n}^{st}(\mathbb{C}\mathrm{P}^{4n}). \]
This last equation is Theorem 1.2 from \cite{BKSW(2006)}. However, note that our proof is not essentially different from the proof of Bangert, Katz, Shnider, and Weinberger. In fact, their reasoning is along the same lines but only for the special case of complex and quaternionic projective spaces.

Note also that in dimension eight, they show that
\[ \sigma_4^{st}(\mathbb{H}\mathrm{P}^2) = \sigma_4^{st}(\mathbb{C}\mathrm{P}^4) \in [{\textstyle\frac{1}{14},\frac{1}{6}}]. \]
In fact, the calculation of this stable systolic constant is possibly within reach, see \cite{BKSW(2006)}.

For the rest of this section let $k=1$. For orientable manifolds (\emph{first type}) integer coefficients will be used. If $M$ is nonorientable and the subgroup of orientation preserving loops contains the kernel of the canonical epimorphism $\phi:\pi_1(M)\twoheadrightarrow H_1(M;\mathbb{Z})_\mathbb{R}$ (\emph{second type}), then we will use the orientation bundle $\mathcal{O}$ as (local) coefficients for homology. Moreover, we will consider only maps $\Phi:M\to T^b$ such that only the last circle of $T^b=S^1\times\cdots\times S^1$ is represented by an orientation reversing loop in $M$ and all other factors by orientation preserving loops. (Then the induced coefficient systems $\mathcal{O}$ on $T^b$ coincide for all such maps $\Phi$ from different manifolds of second type.) If $M$ is nonorientable and there are orientation reversing loops in the kernel of $\phi$ (\emph{third type}), then we will use coefficients in $\mathbb{Z}_2$.

\begin{thm}
Let $M$ and $N$ be two connected closed manifolds with $b_1(M)=b_1(N)=:b$, and let $\Phi:M\to T^b$ and $\Psi:N\to T^b$ be maps that induce isomorphisms on first cohomology with integer coefficients (with the mentioned restriction for second type manifolds).
\begin{enumerate}
\item If both manifolds are of the same type and $\Phi_*[M]_\mathbb{K}=\Psi_*[N]_\mathbb{K}$ (where $\mathbb{K}$ stands for $\mathbb{Z}$, $\mathcal{O}$, or $\mathbb{Z}_2$ according to the type), then
\[ \sigma_1^{st}(M) = \sigma_1^{st}(N). \]
\item If $N$ is of the third type and $\Phi_*[M]_{\mathbb{Z}_2}=\Psi_*[N]_{\mathbb{Z}_2}$, then
\[ \sigma_1^{st}(M) \geq \sigma_1^{st}(N). \]
\end{enumerate}
\end{thm}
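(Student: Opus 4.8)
The plan is to adapt the proof of Theorem~\ref{homol_invar_thm} to the case $k=1$, where the relevant Eilenberg--Mac\,Lane space $K(\mathbb{Z}^b,1)=T^b$ is already finite-dimensional and has torsion-free homology, so no auxiliary loop-space trick is needed. For part (i), I would first pass to the mapping cylinder of $\Phi$ to assume that $\Phi$ is the inclusion of a subcomplex $M\hookrightarrow T^b$, collapsing a $1$-cell (using a CW structure on $M$ with a single $0$-cell) so that $T^b$ is obtained from $M$ by attaching only cells of positive dimension. Then I would invoke the appropriate variant of Lemma~\ref{n-1-skeleton} / Lemma~\ref{n,1-monotone} for the type of $N$: since $\Psi$ is surjective on $\pi_1$, and since $\Phi_*[M]_\mathbb{K}=\Psi_*[N]_\mathbb{K}$ in $H_n(T^b;\mathbb{K})$ with $\mathbb{K}\in\{\mathbb{Z},\mathcal{O},\mathbb{Z}_2\}$ matching the common type, the map $\Psi$ is homotopic to an $(n,1)$-monotone map $\Psi':N\to X:=M\cup (T^b)^{(n-1)}$. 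Here I should note that Lemma~\ref{n,1-monotone} as stated is for orientable $M,N$; the nonorientable analogues (second and third type) are exactly the ones flagged in the excerpt as already handled in \cite{Brunnbauer(2007b)}, and I would cite those. The complex $X$ is finite and an extension of $M$, so by the comparison axiom and the (relative) extension axiom, $\sigma_1^{st}(N)\leq\sigma_1^{st}(X)=\sigma_1^{st}(M)$ (with $r=1$ here since $\Phi^*$ is an isomorphism on $H^1$ with integer coefficients). Swapping the roles of $M$ and $N$ — which is legitimate because the hypotheses are symmetric when both manifolds are of the same type — yields the reverse inequality and hence equality.

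For part (ii), the point is that symmetry breaks: $N$ is of the third type (there are orientation-reversing loops in $\ker\phi$), so homology with $\mathbb{Z}_2$-coefficients is the correct setting for $N$, and the hypothesis is $\Phi_*[M]_{\mathbb{Z}_2}=\Psi_*[N]_{\mathbb{Z}_2}$ in $H_n(T^b;\mathbb{Z}_2)$. Regardless of the type of $M$, the fundamental class $[M]_\mathbb{K}$ over $\mathbb{Z}$ or $\mathcal{O}$ reduces mod $2$ to $[M]_{\mathbb{Z}_2}$, so the hypothesis gives agreement of the $\mathbb{Z}_2$-fundamental classes. Applying the third-type version of Lemma~\ref{n,1-monotone} (the $\mathbb{Z}_2$-coefficient case of Lemma~\ref{n-1-skeleton}(iii)) to $\Psi$, one again gets an $(n,1)$-monotone map $N\to M\cup(T^b)^{(n-1)}$, and the comparison/extension argument produces $\sigma_1^{st}(N)\leq\sigma_1^{st}(M)$. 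There is no reverse inequality because one cannot run the argument starting from $N$: the $\mathbb{Z}_2$-reduction of $M$'s fundamental class is not enough to control $M$ in its own (possibly finer) coefficient system.

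The main obstacle is bookkeeping rather than a new idea: one must verify that the nonorientable refinements of Lemmata~\ref{n-1-skeleton} and \ref{n,1-monotone} genuinely apply, in particular that the orientation behaviour of $\Psi$ is compatible (this is why the theorem restricts to maps $\Phi,\Psi$ in which only the last $S^1$-factor is represented by an orientation-reversing loop, so the induced coefficient system $\mathcal{O}$ on $T^b$ is the same for all manifolds of second type). One should also check that the $(n,1)$-monotone map produced lands in $M\cup(T^b)^{(n-1)}$ and not merely in the $(n-1)$-skeleton of $T^b$ — this is exactly the content of Lemma~\ref{n,1-monotone}, and it is what lets the extension axiom be applied with $M$ itself as the base complex. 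Finally, one must recall (as remarked after Corollary~\ref{stable-1-sys}) that Babenko's lemma shows the extension/comparison machinery indeed computes $\sigma_1^{st}$ and not merely $\sigma_1^\infty$, so that the conclusion is about the stable $1$-systolic constant as claimed.
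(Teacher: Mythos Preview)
Your proposal is essentially correct and follows exactly the route one would expect: the paper itself does not give an independent proof here but simply invokes \cite{Brunnbauer(2007b)}, Theorem~7.1 (with the surface case checked directly), and your sketch is a faithful outline of how that cited argument runs --- adapt the proof of Theorem~\ref{homol_invar_thm} to $k=1$ using the mapping cylinder of $\Phi$, the nonorientable analogues of Lemma~\ref{n,1-monotone} for the relevant coefficient system, and then the comparison and extension axioms, with the asymmetry in part~(ii) arising because the $\mathbb{Z}_2$-reduction only goes one way.

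One small remark: your final paragraph about Babenko's lemma is unnecessary and slightly off. The comparison and extension axioms for $\sigma_k^{st}$ were established directly in Section~2 of this paper for all $k$, so there is nothing extra to verify in order to know that the machinery computes $\sigma_1^{st}$ rather than $\sigma_1^\infty$. Babenko's lemma enters the paper only in the discussion of the \emph{vanishing} criterion around Corollary~\ref{stable-1-sys}, not in the homological-invariance argument. This is not a gap in your proof, just a superfluous justification you can drop.
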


This is a special case of \cite{Brunnbauer(2007b)}, Theorem 7.1 for $I=\sigma_1^{st}$. (This theorem is valid only for manifolds of dimension $n\geq 3$. But the case of surfaces can easily be verfied directly.)

\section{The multilinear intersection form}\label{cohomol_ring}

Consider two simply-connected closed four-manifolds $M$ and $N$. If their intersection forms are equivalent, then the manifolds are homotopy equivalent by the theorem of Milnor and Whitehead. Therefore, their stable $2$-systolic constants coincide. (For this only the comparison axiom is needed since every homotopy equivalence of manifolds is homotopic to an $(n,1)$-monotone map by Lemma \ref{n,1-monotone}.) We will see that one can drop the assumption on the fundamental group using Theorem \ref{homol_invar_thm}.

More generally, let $M$ be a connected closed manifold of dimension $kp$. Consider the \emph{multilinear intersection form}
\begin{align*} 
Q^k_M:(H^k(M;\mathbb{Z})_\mathbb{R})^p &\to \mathbb{Z},\\ 
(\beta_1,\ldots,\beta_p) &\mapsto \langle \beta_1\smile\cdots\smile\beta_p,[M]_\mathbb{Z} \rangle.
\end{align*}
In the case $p=2$, this is the usual intersection form. 

Before we start with the proof of Corollary \ref{intersection_form}, note that a similar result was derived by Hamilton (see \cite{Hamilton(2006)}, Theorem 1.2): if two closed orientable four-manifolds with $b_2^+=1$ have equivalent intersection forms, then their conformal systolic constants agree. Here, the \emph{conformal systolic constant} $\mathrm{CS}(M)$ is the supremum of the \emph{conformal systole} over all Riemannian metrics on $M$. (See \cite{Katz(2007)} for more details on conformal systoles.)

\begin{proof}[Proof of Corollary \ref{intersection_form}]
Write  $b:= b_k(M)= b_k(N)$. Choose maps $\Phi:M\to K(\mathbb{Z}^b,k)$ and $\Psi:N\to K(\mathbb{Z}^b,k)$ inducing isomorphisms on the integral lattices of $k$-dimensional cohomology such that the isomorphism
\[ \Psi^*\circ (\Phi^*)^{-1} : H^k(M;\mathbb{Z})_\mathbb{R} \xrightarrow{\cong} H^k(N;\mathbb{Z})_\mathbb{R} \]
is an equivalence of the multilinear forms $Q^k_M$ and $Q^k_N$. Then
\begin{align*}
\langle \beta_1\smile\cdots\smile\beta_p, \Phi_*[M]_\mathbb{Z} \rangle &= \langle \Phi^*\beta_1\smile\cdots\smile\Phi^*\beta_p, [M]_\mathbb{Z} \rangle \\
&= \langle \Psi^*\beta_1\smile\cdots\smile\Psi^*\beta_p, [N]_\mathbb{Z} \rangle \\
&= \langle \beta_1\smile\cdots\smile\beta_p, \Psi_*[N]_\mathbb{Z} \rangle
\end{align*}
for all cohomology classes $\beta_1,\ldots,\beta_p$ in $H^k(K(\mathbb{Z}^b,k);\mathbb{R})$. 

Since the cohomology ring of the Eilenberg-Mac\,Lane space $K(\mathbb{Z}^b,k)$ equals the exterior algebra $\Lambda_\mathbb{R}[\alpha_1,\ldots,\alpha_b]$ if $k$ is odd and the polynomial algebra $\mathbb{R}[\alpha_1,\ldots,\alpha_b]$ if $k$ is even by work of Cartan and Serre (see for example \cite{Whitehead(1978)}, page 670), it follows that the classes $\Phi_*[M]_\mathbb{Z}$ and $\Psi_*[N]_\mathbb{Z}$ coincide in $H_n(K(\mathbb{Z}^b,k);\mathbb{R})$. By Theorem \ref{homol_invar_thm}, the stable $k$-systolic constants of $M$ and $N$ are equal.
\end{proof}

Consider the octonionic projective plane $\mathbb{O}\mathrm{P}^2$. Its cohomology ring is isomorphic to $\mathbb{Z}[\alpha]/(\alpha^3)$ with $\alpha$ of degree eight. Thus, the intersection form $Q^8_{\mathbb{O}\mathrm{P}^2}$ on eight-dimensional cohomology is given by
\begin{align*}
Q^8_{\mathbb{O}\mathrm{P}^2} : H^8(\mathbb{O}\mathrm{P}^2;\mathbb{Z})\times &H^8(\mathbb{O}\mathrm{P}^2;\mathbb{Z}) \to \mathbb{Z}, \\
(\alpha &, \alpha) \longmapsto 1.
\end{align*}
The intersection forms of $\mathbb{C}\mathrm{P}^8$ and $\mathbb{H}\mathrm{P}^4$ on the respective eight-dimensional cohomology groups are obviously equivalent over $\mathbb{Z}$ to the intersection form of the octonionic projective plane. Therefore, 
\[ \sigma_8^{st}(\mathbb{O}\mathrm{P}^2) = \sigma_8^{st}(\mathbb{H}\mathrm{P}^4) = \sigma_8^{st}(\mathbb{C}\mathrm{P}^8). \]

Note that this shows that neither the canonical metric of the octonionic projective plane nor the symmetric metric of the quaternionic projective four-space are systolically optimal. In fact, Berger showed in \cite{Berger(1972b)} that
\begin{align*} 
\Vol(\mathbb{C}\mathrm{P}^8, g_0)/\stabsys_8(\mathbb{C}\mathrm{P}^8, g_0)^2 &= 1/70, \\
\Vol(\mathbb{H}\mathrm{P}^4, g_0)/\stabsys_8(\mathbb{H}\mathrm{P}^4, g_0)^2 &= 5/126, \\
\Vol(\mathbb{O}\mathrm{P}^2, g_0)/\stabsys_8(\mathbb{O}\mathrm{P}^2, g_0)^2 &= 7/66, 
\end{align*}
where $g_0$ denotes the respective canonical Riemannian metrics.

%%%%%%%%%%%%%%%%%%%%%%%%%%%%%%%%%%%%%%%%%%%%%%%%%%%%%%%%%%%%%%%%%%%%%%%%%%%%%%%%%%%%%
%%%%%%%%%%%%%%%%%%%%%%%%%%%%%%%%%%%%%%%%%%%%%%%%%%%%%%%%%%%%%%%%%%%%%%%%%%%%%%%%%%%%%

\bibliographystyle{amsalpha}
\bibliography{asymp_invar}  

\end{document}